\documentclass[a4paper,12pt]{article}
\DeclareMathAlphabet{\mathfr}{U}{euf}{m}{n}
\usepackage{latexsym}
\usepackage{amsmath}
\usepackage{amssymb}
\usepackage{amscd}
\usepackage{array}
\usepackage{comment}
\usepackage[all]{xy}
\usepackage{amsthm}
\usepackage{amsfonts}

\newtheorem{thm}{Theorem}[section]

\newtheorem{rem}{Remark}[section]

\newtheorem{lema}{Lemma}[section]
\newtheorem{coro}{Corollary}[section]
\newtheorem{prop}{Proposition}[section]
\newtheorem{example}{Example}[section]

\newcommand{\Q}{\mathbb Q}

\newcommand{\Z}{\mathbf Z}

\newcommand{\Gal}{\mathrm{Gal}}

\newcommand{\Hol}{\mathrm{Hol}}

\newcommand{\Sym}{\operatorname{Sym}}
\newcommand{\disc}{\operatorname{disc}}

\newcommand{\End}{\operatorname{End}}

\newcommand{\Aut}{\operatorname{Aut}}

\newcommand{\Id}{\operatorname{Id}}

\newcommand{\wk} {{\widetilde{K}}}



\title{The Hopf Galois property in subfield lattices}
\author{Teresa Crespo, Anna Rio and Montserrat Vela}
\date{\today}

\begin{document}
\maketitle

\let\thefootnote\relax\footnote{T. Crespo acknowledges support by grant MTM2012-33830, Spanish Science Ministry, and 2009SGR 1370; A. Rio and M. Vela acknowledge support by grant MTM2012-34611, Spanish Science Ministry, and 2009SGR 1220.}

\noindent {\bf Abstract.} Let $K/k$ be a finite separable
extension, $n$ its degree and $\wk/k$ its Galois closure. For
$n\le 5$, Greither and Pareigis show that all Hopf Galois
extensions are either Galois or almost classically Galois and they
determine the Hopf Galois character of $K/k$ according to the
Galois group (or the degree) of $\wk/k$. In this paper we study
the case $n=6$, and intermediate extensions $F/k$ such that
$K\subset F\subset \wk$, for degrees $n=4,5,6$. We present an
example of a non almost classically Galois Hopf Galois extension
of $\Q$ of the smallest possible degree and new examples of Hopf
Galois extensions. In the last section we prove a transitivity
property of the Hopf Galois condition.

\vspace{0.2cm} \noindent {\bf Keywords.} Hopf algebra, Hopf Galois
extension, holomorph.

\vspace{0.2cm} \noindent {\bf MSC2010.}  16T05, 12F10, 11R32.

\section{Introduction}

Following  \cite{Childs}, if $K/k$ is a finite extension of
fields, we say that $K/k$ is a Hopf Galois extension if there
exists a finite cocommutative $k-$Hopf algebra   $H$  such that
$K$ is an $H-$module algebra and the $k-$linear map $j:K\otimes_k
H\to\End_k(K)$, defined by $j(s\otimes h)(t)=s(ht)$ for $h\in H$,
$s,t\in K$, is bijective. The following \emph{main theorem} holds.

\begin{thm}[\cite{ChS}]
Let $K/k$ be a Hopf Galois extension with algebra $H$ and  Hopf action $\mu: H\to \End(K)$. For a $k$-sub-Hopf algebra $H'$ of $H$ we define
$$
K^{H'}=\{x\in K \mid \mu(h)(x)=\epsilon(h)\cdot x \mbox{ for all } h\in H'\}
$$
where $\epsilon$ is the counit of $H$. Then, $K^{H'}$ is a
subfield of $K$, containing $k$, and the correspondence
$$
\begin{array}{rcl}
{\cal F}_H:\{H'\subseteq H \mbox{ sub-Hopf algebra}\}&\longrightarrow&\{\mbox{Fields }E\mid k\subseteq E\subseteq K\}\\
H'&\to &K^{H'}
\end{array}
$$
is injective and inclusion reversing.
\end{thm}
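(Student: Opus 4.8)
The plan is to treat the three assertions separately, the first two being routine and the injectivity being the crux. For the claim that $K^{H'}$ is an intermediate field, note first that each $\mu(h)$ is $k$-linear, so $K^{H'}$ is a $k$-subspace of $K$; since $\mu(h)(1)=\epsilon(h)\cdot 1$ and $\mu(h)(a\,x)=a\,\mu(h)(x)$ for $a\in k$, we get $k\subseteq K^{H'}$. Closure under multiplication uses that $H'$ is a sub-Hopf algebra, so $\Delta(h)=\sum h_{(1)}\otimes h_{(2)}$ with $h_{(1)},h_{(2)}\in H'$; the module-algebra axiom then gives, for $x,y\in K^{H'}$ and $h\in H'$,
\[
\mu(h)(xy)=\sum \mu(h_{(1)})(x)\,\mu(h_{(2)})(y)=\sum\epsilon(h_{(1)})\epsilon(h_{(2)})\,xy=\epsilon(h)\,xy,
\]
so $xy\in K^{H'}$. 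Thus $K^{H'}$ is a commutative $k$-subalgebra of $K$ of finite $k$-dimension, hence an integral domain finite over the field $k$, hence itself a field. The inclusion-reversing property is immediate: if $H_1\subseteq H_2$, the defining conditions for $K^{H_2}$ contain those for $K^{H_1}$, whence $K^{H_2}\subseteq K^{H_1}$.

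For injectivity, write $E=K^{H'}$ and set $H_E=\{h\in H:\mu(h)(x)=\epsilon(h)\,x\ \text{for all}\ x\in E\}$. I would deduce injectivity from the reconstruction identity $H'=H_E$: then $K^{H_1}=K^{H_2}=E$ forces $H_1=H_E=H_2$, since $H_E$ depends only on $E$. The inclusion $H'\subseteq H_E$ is clear, and to obtain equality I would combine two facts. The first is a dimension bound. For $h\in H'$, $x\in E$ and $y\in K$ one computes $\mu(h)(xy)=\sum\epsilon(h_{(1)})\,x\,\mu(h_{(2)})(y)=x\,\mu(h)(y)$, so $\mu(h)$ is $E$-linear and $\mu$ maps $H'$ into $\operatorname{End}_E(K)$. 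Because $K/k$ is $H$-Hopf Galois, $j$ carries a $k$-basis of $H$ to a $K$-basis of $\operatorname{End}_k(K)$; in particular the elements $\mu(h)$, with $h$ ranging over a $k$-basis of $H'$, are $K$-linearly independent. As $\operatorname{End}_E(K)$ is a $K$-vector space of dimension $[K:E]$, this yields $\dim_k H'\le[K:E]$.

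The substantive step --- and the main obstacle --- is the reverse equality $[K:E]=\dim_k H'$, that is, that $K/E$ is again Hopf Galois with $\mu(H')$ spanning $\operatorname{End}_E(K)$ over $K$. I would obtain this by passing to the commutative dual $L=H^{*}$, where the $H$-action becomes an $L$-comodule-algebra structure and the bijectivity of $j$ becomes the statement that the canonical Galois map $K\otimes_k K\to K\otimes_k L$ is bijective; the sub-Hopf algebra $H'$ then corresponds to a quotient Hopf algebra $L\twoheadrightarrow\overline L$, and $E=K^{H'}$ to the $\overline L$-coinvariants. The fundamental structure theorem for Hopf modules shows $K$ is faithfully flat over $E$ and supplies an equivalence of categories forcing $[K:E]=\dim_k\overline L=\dim_k H'$, and along the way that $H_E$ is a sub-Hopf algebra. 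Granting these, the chain argument finishes: $H'\subseteq H_E$ are sub-Hopf algebras, and $E\subseteq K^{H_E}\subseteq K^{H'}=E$ gives $K^{H_E}=E$, so $\dim_k H_E=[K:K^{H_E}]=[K:E]=\dim_k H'$ forces $H'=H_E$, whence injectivity. I expect the faithfully-flat descent step via the Hopf-module structure theorem to be where essentially all the difficulty resides; everything else is formal.
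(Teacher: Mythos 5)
A preliminary remark: the paper does not prove this statement at all --- it is quoted as background from Chase--Sweedler \cite{ChS} --- so there is no internal proof to compare yours against, and your proposal has to be judged against what a complete proof requires. The routine parts of your argument are correct and complete: the module-algebra axioms plus $\Delta(H')\subseteq H'\otimes H'$ show $K^{H'}$ is a $k$-subalgebra containing $k$, a finite-dimensional domain over a field is a field, and inclusion-reversal is immediate. Your inequality $\dim_k H'\le [K:E]$ for $E=K^{H'}$ is also correctly established: $\mu(h)$ is $E$-linear for $h\in H'$, bijectivity of $j$ makes $\{\mu(h_i)\}$ a $K$-basis of $\End_k(K)$ for any $k$-basis $\{h_i\}$ of $H$, and $\dim_K\End_E(K)=[K:E]$.

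The genuine gap is exactly where you locate it, and the theorem you invoke does not close it. The reverse equality $[K:E]=\dim_k H'$ --- equivalently, that $K/E$ is again Hopf Galois with algebra $H'$ --- is the entire content of the Chase--Sweedler theorem, and the fundamental structure theorem for Hopf modules does not yield it: that theorem applies to a space carrying simultaneously an $\overline L$-module and an $\overline L$-comodule structure with a compatibility, whereas $K$ has only the comodule structure; there is no natural $\overline L$-action on $K$. What you actually need is the quotient theorem for Hopf--Galois extensions: if $K/k$ is $L$-Galois and $\overline L$ is a quotient Hopf algebra with coinvariants $E$, then $K$ is faithfully flat over $E$ and $K/E$ is $\overline L$-Galois. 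But the structure theorem for \emph{relative} $(E,\overline L)$-Hopf modules, which is what your "equivalence of categories" refers to, becomes available only \emph{after} faithful flatness (or the Galois property) of $K/E$ is known --- which is precisely what is to be proven. As written, the appeal is circular: in effect you cite the theorem to prove it (in modern language this quotient theorem is due to Schneider; in the finite setting it also leans on freeness of $H$ over $H'$, a Nichols--Zoeller-type input). A second, smaller unjustified step is the claim that $H_E$ is a sub-Hopf algebra: it is visibly a subalgebra, but closure under $\Delta$ is not clear, and "along the way" is not an argument. That particular point is avoidable --- if $K^{H_1}=K^{H_2}=E$, apply the dimension equality to the sub-Hopf algebra generated by $H_1\cup H_2$, which is a sub-Hopf algebra because $\Delta$ is an algebra map and the antipode an anti-algebra map, and which still has fixed field $E$ --- but this again presupposes the unproven dimension equality. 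Note finally that in the separable setting of this paper a more elementary route exists via Greither--Pareigis descent: the sub-Hopf algebras of $H=(\wk[N])^G$ are exactly $(\wk[N'])^G$ for $G$-stable subgroups $N'\le N$, base change to $\wk$ turns $K^{H'}$ into the functions on $G/G'$ constant on $N'$-orbits, so $[K:K^{H'}]=|N'|=\dim_k H'$, and injectivity reduces to the fact that a subgroup of a regular permutation group is determined by the orbit of a single point.
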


\vspace{0.2cm} \noindent {\bf Notation.} Throughout this paper,
$k$ will denote a field, $K/k$ a separable extension of degree
$n$, $\wk/k$  its Galois closure, $G$ the Galois group of $\wk/k$
and $G'$ the Galois group of $\wk/K$.

\vspace{0.2cm} For separable field extensions, Greither and
Pareigis \cite{GP} give the following group-theoretic
characterization of the Hopf Galois property.

\begin{thm}
$K/k$ is a Hopf Galois extension if and only if there exists a
regular subgroup $N$ of $S_n$ normalized by $\lambda (G)$, where
$\lambda:G \rightarrow S_n$ is the morphism given by the action of
$G$ on the left cosets $G/G'$, i.e.

$$
 \begin{array}{rl}
\lambda: & G\rightarrow \Sym(G/G') \simeq S_n\\
 &g\mapsto (\lambda_g: xG'\mapsto gxG')\, .
\end{array}
$$
\end{thm}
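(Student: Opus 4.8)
The plan is to establish both implications at once by faithfully flat (Galois) descent, reducing the existence of a $k$-Hopf algebra $H$ to a purely combinatorial condition over a separable closure, where the algebra $K$ splits completely. Fix a separable closure $\Omega$ of $k$ containing $\wk$, write $X=G/G'$ for the set of left cosets (so $|X|=n$), and recall that the given map $\lambda:G\to\Sym(X)\cong S_n$ is the coset-permutation action. Since $K/k$ is separable, base change yields a canonical $\Omega$-algebra isomorphism $\Omega\otimes_k K\cong\mathrm{Map}(\mathrm{Hom}_k(K,\Omega),\Omega)$, and restriction to $\wk$ identifies $\mathrm{Hom}_k(K,\Omega)$ with $X$ as a $G$-set, the $G$-action being exactly $\lambda$. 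Thus $K=(\mathrm{Map}(X,\Omega))^{\Gal(\Omega/k)}$ is a Galois form of the split $\Omega$-algebra $\mathrm{Map}(X,\Omega)$, and any $k$-Hopf algebra $H$ making $K$ Hopf Galois base-changes to an $\Omega$-Hopf algebra $H\otimes_k\Omega$ making $\mathrm{Map}(X,\Omega)$ Hopf Galois over $\Omega$.

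First I would classify the Hopf Galois structures on the split algebra $\mathrm{Map}(X,\Omega)/\Omega$. Because $\Omega$ is separably closed, a finite cocommutative $\Omega$-Hopf algebra with étale dual is forced to be a group algebra $\Omega[N]$ for a group $N$ with $|N|=n=\dim_\Omega\Omega[N]$, whose grouplike elements act on $\mathrm{Map}(X,\Omega)$ through a homomorphism $N\to\Aut_{\Omega\text{-alg}}(\mathrm{Map}(X,\Omega))=\Sym(X)$. Bijectivity of the map $j$ of the definition then forces this action to be simply transitive, so $N$ is identified with a regular subgroup of $\Sym(X)$; conversely every regular subgroup $N\le\Sym(X)$ produces such a structure via $H_\Omega=\Omega[N]$. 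This gives a bijection between Hopf Galois structures on $\mathrm{Map}(X,\Omega)/\Omega$ and regular subgroups $N$ of $\Sym(X)$.

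Next I would carry out the descent from $\Omega$ to $k$. A $k$-form $H$ of $\Omega[N]$ corresponds to a semilinear $\Gal(\Omega/k)$-action on $\Omega[N]$ compatible with the Hopf structure and with the action on $\mathrm{Map}(X,\Omega)=\Omega\otimes_k K$. Such a descent datum must permute the grouplike elements, hence act on $N$; tracing the compatibility with the action on $X$ shows that $\sigma\in\Gal(\Omega/k)$ acts on $\eta\in N\subseteq\Sym(X)$ by the conjugation $\eta\mapsto\lambda_{\bar\sigma}\,\eta\,\lambda_{\bar\sigma}^{-1}$, where $\bar\sigma$ is the image of $\sigma$ in $G=\Gal(\wk/k)$ (the action on $X$ factoring through $G$). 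The descent datum therefore exists precisely when conjugation by $\lambda(G)$ preserves $N$, that is, when $\lambda(G)$ normalizes $N$; in that case $H=(\Omega[N])^{\Gal(\Omega/k)}$ is the required $k$-Hopf algebra and $K=(\mathrm{Map}(X,\Omega))^{\Gal(\Omega/k)}$ becomes $H$-Hopf Galois. Running the two directions together yields the stated equivalence.

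The hard part will be the descent step, specifically verifying that the semilinear Galois action permutes the grouplikes of $\Omega[N]$ exactly by conjugation through $\lambda$, and confirming that the fixed-point object $(\Omega[N])^{\Gal(\Omega/k)}$ descends as a $k$-\emph{Hopf algebra} (not merely a $k$-vector space) with $K$ genuinely Hopf Galois over it, which is where faithfully flat descent must be invoked carefully. The splitting classification of the second paragraph also demands attention: one must justify that the relevant $\Omega$-Hopf algebras are group algebras and that regularity of the $N$-action is equivalent to bijectivity of $j$, for which I would rely on the structure theory of finite cocommutative Hopf algebras over a field together with a dimension count.
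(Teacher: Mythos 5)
The paper itself contains no proof of this statement: it is quoted from Greither--Pareigis \cite{GP}. Your proposal reconstructs, in outline, exactly the argument of \cite{GP}: base change to split $K$, classify Hopf Galois structures on the split algebra as group algebras $\Omega[N]$ for regular subgroups $N\le\Sym(X)$, and identify the existence of a descent datum with stability of $N$ under conjugation by $\lambda(G)$; your descended Hopf algebra $(\Omega[N])^{\Gal(\Omega/k)}$, with the Galois group acting naturally on coefficients and on $N$ by $\eta\mapsto\lambda_{\bar\sigma}\eta\lambda_{\bar\sigma}^{-1}$, is precisely the Greither--Pareigis Hopf algebra. The only structural difference is that \cite{GP} descend along the finite extension $\wk/k$ rather than along a separable closure $\Omega/k$; that choice avoids the continuity issues of infinite Galois descent (harmless here anyway, since all objects are finite-dimensional and so defined over finite subextensions), at the price of having to establish the group-algebra classification over the non-separably-closed field $\wk$.

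One step of your plan is under-justified in a way that matters for the stated generality. The theorem concerns arbitrary separable extensions, so $k$ may have characteristic $p>0$, and then ``structure theory of finite cocommutative Hopf algebras plus a dimension count'' does not give that the relevant $\Omega$-Hopf algebra $L$ is a group algebra: over a separably closed field of characteristic $p$ there exist finite cocommutative Hopf algebras that are not group algebras, for instance $\Omega[x]/(x^p)$ with $x$ primitive. What saves you is the hypothesis you correctly built into your statement --- that the dual is \'etale --- but this \'etaleness must itself be deduced from the Hopf Galois condition on the split \'etale algebra, not from structure theory alone. A clean route: a primitive element $x\in L$ acts on any module algebra as a derivation; the algebra $\mathrm{Map}(X,\Omega)$ has no nonzero derivations (differentiate the idempotents), so $x$ acts as $0$, and injectivity of $j$ then forces $x=0$. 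Thus $L$ has no nonzero primitives, so the finite group scheme $\mathrm{Spec}(L^*)$ has trivial Lie algebra, hence trivial connected component, hence is \'etale, hence constant over $\Omega$, giving $L=\Omega[N]$. (Alternatively, the torsor isomorphism $\mathrm{Map}(X,\Omega)\otimes_\Omega \mathrm{Map}(X,\Omega)\simeq \mathrm{Map}(X,\Omega)\otimes_\Omega L^*$ exhibits $L^*$ as \'etale after a faithfully flat base change, and \'etaleness descends.) With that lemma in place, the remaining steps of your plan go through as you describe.
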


If there exists a regular subgroup $N$ of $S_n$ normalized by
$\lambda(G)$ and contained in $\lambda(G)$, we say that $K/k$ is
an \emph{almost classically Galois extension}.

\begin{thm} (\cite{GP})
$K/k$ is almost classically Galois if and only if $G'$ has a
normal complement $N$ in $G$.
\end{thm}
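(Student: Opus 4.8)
The plan is to translate both sides of the equivalence into the language of the permutation representation $\lambda$ and the cosets $G/G'$, using the Greither--Pareigis criterion (Theorem 1.2) as the bridge. The preliminary step underlying everything is to observe that $\lambda$ is injective. Indeed, $\lambda_g$ fixes every coset $xG'$ exactly when $g\in xG'x^{-1}$ for all $x\in G$, so $\ker\lambda=\bigcap_{x\in G}xG'x^{-1}$ is the normal core of $G'$ in $G$. Since $\wk$ is the \emph{Galois closure} of $K=\wk^{G'}$, this core is trivial, whence $\lambda$ is injective and induces a bijection between subgroups of $G$ and subgroups of $\lambda(G)$.

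With injectivity in hand, I would set up the dictionary between subgroups $M\le G$ and their images $\lambda(M)\le\lambda(G)$, and record what regularity of $\lambda(M)$ on $G/G'$ means. Computing the stabilizer of the base point $eG'$ gives $\{m\in M:\ mG'=G'\}=M\cap G'$, so the action of $\lambda(M)$ is free iff $M\cap G'=\{1\}$ and transitive iff $MG'=G$; together these say precisely that $M$ is a complement of $G'$ in $G$, and in that case $|\lambda(M)|=|M|=[G:G']=n$, matching the number of points. Moreover, because $\lambda$ is an injective homomorphism, $\lambda(M)$ is normalized by $\lambda(G)$ iff $M$ is normal in $G$. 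These two equivalences are the technical heart of the argument.

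For the forward implication, assume $K/k$ is almost classically Galois, so by definition there is a regular subgroup $N\subseteq S_n$ normalized by $\lambda(G)$ and contained in $\lambda(G)$. Writing $N=\lambda(M)$ with $M=\lambda^{-1}(N)\le G$, the normalization condition forces $M\trianglelefteq G$ and regularity forces $M\cap G'=\{1\}$ and $MG'=G$; hence $M$ is a normal complement of $G'$ in $G$. Conversely, if $N$ is a normal complement of $G'$, I would take the candidate regular subgroup to be $\lambda(N)$: it lies in $\lambda(G)$, it is normalized by $\lambda(G)$ because $N\trianglelefteq G$, and the complement conditions $N\cap G'=\{1\}$, $NG'=G$ make $\lambda(N)$ act regularly on $G/G'$, so Theorem 1.2 yields that $K/k$ is almost classically Galois.

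I expect the only genuinely delicate point to be the injectivity of $\lambda$, that is, identifying $\ker\lambda$ with the normal core of $G'$ and invoking the minimality of the Galois closure to see it is trivial; once this is secured, the remaining steps are a direct transcription of ``regular subgroup inside $\lambda(G)$'' into ``normal complement'', and no further computation is needed.
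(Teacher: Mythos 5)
Your proof is correct: the triviality of the normal core of $G'$ (forced by the minimality of the Galois closure), the resulting injectivity of $\lambda$, and the dictionary ``$\lambda(M)$ regular and normalized by $\lambda(G)$ $\iff$ $M$ is a normal complement of $G'$'' give exactly the stated equivalence, and the paper itself supplies no proof of this theorem (it is cited from \cite{GP}), your argument being the standard one underlying that citation. Two cosmetic points only: ``free iff $M\cap G'=\{1\}$'' is literally the triviality of the base-point stabilizer (freeness at all points then follows once transitivity is in force, since stabilizers of points in one orbit are conjugate), and the final conclusion follows from the paper's \emph{definition} of almost classically Galois rather than from Theorem 1.2, which is the Hopf Galois criterion.
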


In particular, if $K/k$ is Galois, then $G'=1$ and has normal
complement $N=G$. The following theorem provides a justification
for the notion of almost classically Galois extensions.

\begin{thm}[\cite{GP}]
If $K/k$ is almost classically Galois, then there is a Hopf
algebra $H$ such that $K/k$ is Hopf Galois with algebra $H$ and
the main theorem holds in its strong form, i.e. the correspondence
${\cal F}_H$ between $k$-sub-Hopf algebras of $H$ and
$k$-subfields of $K$ is bijective.
\end{thm}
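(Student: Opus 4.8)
The strong form to be proved is the \emph{bijectivity} of ${\cal F}_H$, so beyond the injectivity and inclusion reversal already granted by Theorem 1.1 the task is surjectivity. The plan is to exhibit one well-chosen Hopf algebra $H$ whose sub-Hopf algebras are \emph{as numerous as} the intermediate fields of $K/k$, and then to deduce bijectivity formally. By Theorem 1.3 the hypothesis furnishes a normal complement $N$ of $G'$ in $G$, so $G=NG'$ with $N\cap G'=1$ and $N\triangleleft G$; this identifies $G/G'$ with $N$, and under this identification $\lambda$ restricts on $N$ to the left regular representation of $N$. I would take as regular subgroup not $\lambda(N)$ itself but its centralizer $N^{*}:=C_{\Sym(G/G')}(\lambda(N))$. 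Since $\lambda(N)$ is regular, $N^{*}$ is again a regular subgroup of order $n$ (the opposite regular representation of $N$); and since $N\triangleleft G$ forces $\lambda(N)\triangleleft\lambda(G)$, the group $\lambda(G)$ normalizes $\lambda(N)$ and hence normalizes its centralizer $N^{*}$. By Theorem 1.2 the extension $K/k$ is therefore Hopf Galois with algebra $H:=\wk[N^{*}]^{G}$, where $G$ acts on $\wk$ through $\Gal(\wk/k)$ and on $N^{*}$ by conjugation via $\lambda$.

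The decisive step is to compute this conjugation action. By construction $\lambda(N)$ centralizes $N^{*}$, so $N$ acts trivially; and for $g'\in G'$ the permutation $\lambda_{g'}$ acts on $G/G'\simeq N$ as the conjugation automorphism $n\mapsto g'ng'^{-1}$, which one checks carries $N^{*}$ to itself by transporting that automorphism through the anti-isomorphism $N\simeq N^{*}$. Hence the $G$-action on $N^{*}\simeq N$ factors through $G\to G/N\simeq G'$ and is precisely the conjugation action of $G'$ on $N$. Feeding this into the Greither--Pareigis description of the sub-Hopf algebras of $\wk[N^{*}]^{G}$, namely the $\wk[P]^{G}$ indexed by the $G$-stable subgroups $P$ of $N^{*}$, the $G$-stable subgroups of $N^{*}$ are exactly the subgroups of $N$ invariant under conjugation by $G'$.

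It then remains to match these with the intermediate fields. Galois theory of $\wk/k$ identifies the fields $E$ with $k\subseteq E\subseteq K=\wk^{G'}$ with the subgroups $S$ satisfying $G'\subseteq S\subseteq G$. Because $G=N\rtimes G'$, the assignments $S\mapsto S\cap N$ and $P\mapsto PG'$ are mutually inverse bijections between such $S$ and the $G'$-invariant subgroups $P$ of $N$: indeed $PG'$ is a subgroup exactly when $G'$ normalizes $P$, and any $s\in S$ written $s=ng'$ has $n=s g'^{-1}\in S\cap N$. Thus the set of sub-Hopf algebras of $H$ and the set of intermediate fields of $K/k$ are finite and of the same cardinality, so the injection ${\cal F}_{H}$ of Theorem 1.1 is forced to be a bijection, with no need to compute any fixed field explicitly. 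I expect the main obstacle to be the second step, pinning down the conjugation action on the centralizer $N^{*}$ and identifying its $G$-stable subgroups with the $G'$-invariant subgroups of $N$; once the action is shown to factor through $G'$, the two countings and the finite-cardinality argument finish the proof.
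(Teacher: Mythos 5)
The paper itself gives no proof of this theorem: it is quoted from Greither--Pareigis \cite{GP}, so the only meaningful comparison is against the original argument, which your proof essentially reproduces. Your choice of the regular subgroup $N^{*}=C_{\Sym(G/G')}(\lambda(N))$ (the opposite regular representation of the normal complement) is exactly the structure for which \cite{GP} establish the strong form, and your chain of identifications --- $G$-stable subgroups of $N^{*}$ correspond to $G'$-invariant subgroups of $N$, which correspond to subgroups $S$ with $G'\subseteq S\subseteq G$, which correspond to intermediate fields --- is sound, as is the final step of upgrading the injectivity of ${\cal F}_H$ from Theorem 1.1 to bijectivity by comparing cardinalities of finite sets. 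The one ingredient you invoke rather than prove, namely that the sub-Hopf algebras of $\wk[N^{*}]^{G}$ are precisely the $\wk[P]^{G}$ for $G$-stable subgroups $P$ of $N^{*}$, is itself a descent result from \cite{GP} (resting on the fact that a sub-Hopf algebra of a group algebra is spanned by the group-like elements it contains), so your argument is not self-contained at that point, but the step is correct and standard.
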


It is known that all Hopf Galois extensions of degree $n \le 7$
are almost classically Galois extensions.

To perform our computations we use the following reformulation due
to Byott \cite{Byott96}.

\begin{thm} Let $G$ be a finite group, $G'\subset G$ a subgroup and $\lambda:G\to \Sym(G/G')$ the morphism defined above.
Let $N$ be a group of
order $[G:G']$ with identity element $e_N$. Then there is a
bijection between
$$
{\cal N}=\{\alpha:N\hookrightarrow \Sym(G/G') \mbox{ such that
}\alpha (N)\mbox{ is regular}\}
$$
and
$$
{\cal G}=\{\beta:G\hookrightarrow \Sym(N) \mbox{ such that }\beta
(G')\mbox{ is the stabilizer of } e_N\}
$$
Under this bijection, if $\alpha\in {\cal N}$ corresponds to
$\beta\in {\cal G}$, then $\alpha(N)$ is normalized by
$\lambda(G)$ if and only if $\beta(G)$ is contained in the
holomorph $\Hol(N)$ of $N$.
\end{thm}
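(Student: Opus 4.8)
The plan is to exhibit explicitly the two constructions relating $\mathcal{N}$ and $\mathcal{G}$, check that they are mutually inverse, and then transport the normalization condition through the resulting conjugation by a single fixed bijection. Throughout I fix the base point given by the trivial coset $G'\in G/G'$, and I note that, since $\wk$ is the Galois closure of $K/k$, the subgroup $G'$ is core-free, so $\ker\lambda=\bigcap_x xG'x^{-1}=1$ and $\lambda$ is injective; this is what makes the resulting $\beta$ an embedding.

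Given $\alpha\in\mathcal{N}$, regularity of $\alpha(N)$ means that the evaluation map $\phi:N\to G/G'$, $\phi(\eta)=\alpha(\eta)(G')$, is a bijection with $\phi(e_N)=G'$. I would then define $\beta(g)=\phi^{-1}\circ\lambda_g\circ\phi\in\Sym(N)$. This $\beta$ is a homomorphism with $\ker\beta=\ker\lambda=1$, hence injective, and $\beta(g)(e_N)=\phi^{-1}(gG')$ equals $e_N$ precisely when $g\in G'$, so $\beta(G')$ is the stabilizer of $e_N$ inside $\beta(G)$; thus $\beta\in\mathcal{G}$. Conversely, given $\beta\in\mathcal{G}$, the orbit–stabilizer theorem shows the orbit of $e_N$ has size $[G:G']=|N|$, so $\beta(G)$ is transitive on $N$ and $c:G/G'\to N$, $c(gG')=\beta(g)(e_N)$, is a well-defined bijection. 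Writing $\rho:N\to\Sym(N)$ for the left regular representation $\rho(\eta)(\xi)=\eta\xi$, I would set $\alpha(\eta)=c^{-1}\circ\rho(\eta)\circ c$; this is injective with regular image because $\rho(N)$ is regular, so $\alpha\in\mathcal{N}$. A short computation using $\alpha(\eta\xi)=\alpha(\eta)\alpha(\xi)$ shows $c=\phi^{-1}$ and that the assignments $\alpha\mapsto\beta$ and $\beta\mapsto\alpha$ are inverse to one another, giving the bijection $\mathcal{N}\leftrightarrow\mathcal{G}$.

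For the final equivalence I would record the two conjugation identities that fall out of the construction, namely $\alpha(\eta)=\phi\circ\rho(\eta)\circ\phi^{-1}$ and $\lambda_g=\phi\circ\beta(g)\circ\phi^{-1}$, together with the standard description of the holomorph as the normalizer $\Hol(N)=\Norm_{\Sym(N)}(\rho(N))$. Conjugating by the single fixed bijection $\phi$ then yields, for all $g\in G$ and $\eta\in N$,
$$\lambda_g\,\alpha(\eta)\,\lambda_g^{-1}=\phi\bigl(\beta(g)\,\rho(\eta)\,\beta(g)^{-1}\bigr)\phi^{-1},$$
so $\lambda(G)$ normalizes $\alpha(N)$ if and only if $\beta(G)$ normalizes $\rho(N)$, i.e.\ if and only if $\beta(G)\subseteq\Hol(N)$.

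I expect the main obstacle to be purely bookkeeping: verifying that the two constructions are mutually inverse and, in particular, fixing once and for all the conventions (base point, left versus right regular representation, and the identification $\Hol(N)=\Norm_{\Sym(N)}(\rho(N))$) so that the single conjugating bijection $\phi$ simultaneously carries $\lambda$ to $\beta$ and $\alpha$ to $\rho$. Once these conventions are aligned, the normalization statement is immediate from the displayed conjugation identity.
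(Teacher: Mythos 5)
Your proof is correct. One thing to note up front: the paper itself contains no proof of this statement --- it is quoted as Byott's translation theorem \cite{Byott96} and used as a black box for the computations, so there is no internal argument to compare yours against. Your route is the standard one (essentially Byott's original argument): conjugate $\lambda$ by the evaluation bijection $\phi(\eta)=\alpha(\eta)(G')$ to produce $\beta$, conjugate the left regular representation $\rho$ by $c(gG')=\beta(g)(e_N)$ to go back, observe that the homomorphism property of $\alpha$ (resp.\ $\beta$) forces $\alpha=\phi\,\rho\,\phi^{-1}$ (resp.\ $c=\phi^{-1}$), and then transport the normalization condition through the single bijection $\phi$ using $\Hol(N)=\Norm_{\Sym(N)}(\rho(N))$. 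All of these verifications go through as you indicate. You were also right to flag injectivity of $\lambda$ as a genuine hypothesis rather than bookkeeping: the theorem as literally stated (``$G'\subset G$ a subgroup'') is false without core-freeness of $G'$, since for a subgroup with nontrivial core the set $\mathcal{N}$ is nonempty while $\mathcal{G}$ is empty (any $\beta\in\mathcal{G}$ gives a faithful transitive action of $G$ with point stabilizer $G'$). The hypothesis is hidden in the phrase ``the morphism defined above,'' i.e.\ in the Galois-closure context where $\ker\lambda=\bigcap_{x\in G}xG'x^{-1}=1$, exactly as you point out.
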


We consider Byott reformulation as an ``algorithmic" procedure to
check if a given extension $K/k$ is Hopf Galois.
\begin{description}
\item[Step 1:] Let $N$ run through a system of representatives of isomorphism classes of groups of order
$n$;
\item[Step 2:] Compute $\Hol(N)\subseteq S_n$;
\item[Step 3:] Check if $G\subseteq \Hol(N)$.
\end{description}
In the third step, we should be aware that $S_n$ may have
different conjugacy classes of transitive subgroups isomorphic to
$G$, namely that the embedding $G\hookrightarrow \Sym(G/G')\simeq
S_n$ must be taken into account.

In this paper, we shall follow this algorithmic procedure with
$n=4,\ 5,\ 6$ to describe the Hopf Galois character of separable
extensions $K/k$ and extensions $F/k$ for  fields $F$ such that
$K\subset F\subset \wk$.  Although most of the work has to be done
case by case, we include here some useful generic results.

\begin{lema}
Let us consider  a dihedral group
$$D_{2n}=\langle s,r| s^2=1,\ r^n=1,\ sr=r^{-1}s\rangle$$
and a subgroup $G'$ of order $2$. If $G'$ is not normal, then the
cyclic subgroup  $N=\langle r\rangle$ is a normal complement of
$G'$.
\end{lema}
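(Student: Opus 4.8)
The plan is to verify directly the three conditions that make $N=\langle r\rangle$ a normal complement of $G'$ in $G=D_{2n}$: that $N$ is normal in $G$, that $N\cap G'=\{1\}$, and that $NG'=G$. The first condition is immediate, since $N$ has index $2$ in $G$ and is therefore normal. The whole content of the lemma thus reduces to identifying the nontrivial element of $G'$ and checking that it lies outside $N$; once this is done, $NG'=G$ follows by a counting argument.

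First I would classify the elements of order $2$ in $D_{2n}$. Every element is either a rotation $r^j$ or a reflection $sr^j$. A rotation $r^j$ has order $2$ only when $j=n/2$, which requires $n$ to be even, whereas every reflection $sr^j$ has order $2$. Hence the subgroups of order $2$ are $\{1,sr^j\}$ for $0\le j<n$, together with $\{1,r^{n/2}\}$ in the case that $n$ is even.

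Next I would determine which of these are normal. A subgroup of order $2$ is normal precisely when its nontrivial element is central, so I would record the center of $D_{2n}$: it is trivial when $n$ is odd and equals $\{1,r^{n/2}\}$ when $n$ is even. Consequently the unique normal subgroup of order $2$ is $\{1,r^{n/2}\}$, and it exists only for even $n$. The hypothesis that $G'$ is \emph{not} normal therefore forces the nontrivial element of $G'$ to be a reflection $sr^j$ for some $j$.

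Finally, since $N$ consists exactly of the rotations $\{1,r,\dots,r^{n-1}\}$ and the nontrivial element $sr^j$ of $G'$ is a reflection, we obtain $N\cap G'=\{1\}$. Combining this with the order count $|N|\,|G'|=n\cdot 2=2n=|G|$ gives $NG'=G$, so $N$ is a complement of $G'$; being normal, it is a normal complement, as claimed. The only point demanding any care — the main obstacle, such as it is — is the parity case distinction used to locate the order-$2$ rotations and to pin down the center; once that is settled, the complement property is purely a matter of counting.
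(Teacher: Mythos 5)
Your proof is correct, and in fact the paper states this lemma without any proof at all (it appears among the ``useful generic results'' in the introduction, left to the reader as elementary), so there is no printed argument to compare against; your write-up supplies exactly the kind of routine verification the authors suppressed. Two small remarks. First, your description of the center (``trivial when $n$ is odd, equal to $\{1,r^{n/2}\}$ when $n$ is even'') and the consequent uniqueness of the normal order-$2$ subgroup fail in the degenerate cases $n\le 2$, where $D_{2n}$ is abelian; this does not damage the proof, because in those cases every subgroup is normal and the lemma is vacuous, but it is worth flagging the implicit assumption $n\ge 3$. Second, you can avoid the center computation and the parity discussion altogether: every subgroup of $N=\langle r\rangle$ is characteristic in the cyclic group $N$, and $N\trianglelefteq G$ since it has index $2$, so every subgroup of $N$ is normal in $G$; hence a non-normal $G'$ cannot be contained in $N$, giving $G'\cap N=\{1\}$ directly, and the counting argument $|N|\,|G'|=|G|$ finishes as in your last paragraph. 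This variant proves the same statement with less case analysis and works uniformly in $n$.
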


\begin{lema}
Let $F$ be a Frobenius group. If $N$ is the Frobenius kernel and
$G'$ is the Frobenius complement, then $N$ is a normal complement
of $G'$ in $F$.
\end{lema}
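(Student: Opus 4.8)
The plan is to verify the three defining conditions of a normal complement: that $N$ is normal in $F$, that $N\cap G'=\{1\}$, and that $NG'=F$. The first of these is the substantial input; the other two will follow by an elementary counting argument once the first is in hand.

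First I would recall the defining property of a Frobenius group: writing $G'$ for the complement, one has $G'\cap gG'g^{-1}=\{1\}$ for every $g\in F\setminus G'$, and the kernel is by definition $N=\bigl(F\setminus\bigcup_{g\in F}gG'g^{-1}\bigr)\cup\{1\}$. The normality of $N$ in $F$ is exactly the classical theorem of Frobenius, whose only known proofs pass through character theory; I would invoke it as a known result rather than reprove it. This is the main obstacle, in the sense that it is the one step that is genuinely deep, whereas the rest is bookkeeping. With normality granted, the condition $N\cap G'=\{1\}$ is immediate: $G'$ is the conjugate $gG'g^{-1}$ for $g=1$, so every nontrivial element of $G'$ lies in the union $\bigcup_{g}gG'g^{-1}$ and is therefore excluded from $N$ by construction.

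It then remains to show $NG'=F$, for which I would count orders. The Frobenius condition forces the normalizer of $G'$ in $F$ to equal $G'$ itself, so $G'$ has exactly $[F:G']$ distinct conjugates; since any two of them meet only in the identity, the union $\bigcup_{g}gG'g^{-1}$ contains exactly $1+[F:G']\,(|G'|-1)=|F|-[F:G']+1$ elements. Hence $N$, which is the complement of this union together with the identity, has order $[F:G']=|F|/|G'|$. Consequently $|N|\,|G'|=|F|$, and combined with $N\cap G'=\{1\}$ this yields $|NG'|=|N|\,|G'|/|N\cap G'|=|F|$, so $NG'=F$. Together with normality and trivial intersection, this exhibits $N$ as a normal complement of $G'$ in $F$, as claimed.
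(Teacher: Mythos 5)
Your proof is correct and complete. The paper in fact states this lemma without any proof, treating it as classical Frobenius theory; your write-up simply makes that citation explicit — normality of the kernel is exactly Frobenius's character-theoretic theorem, and the counting argument (using $N_F(G')=G'$ and the pairwise trivial intersection of conjugates to get $|N|=[F:G']$, hence $N\cap G'=\{1\}$ and $NG'=F$) is the standard bookkeeping that turns that theorem into the statement that $N$ is a normal complement of $G'$.
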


\begin{lema}
If $n\ge 3$, a subgroup of order 2 of the symmetric group $S_n$ is
never normal. It has a normal complement if and only if its
nontrivial element is an odd permutation. In that case, the normal
complement is the alternating group $A_n$.
\end{lema}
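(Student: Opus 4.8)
The plan is to reduce the three assertions to two standard structural facts about $S_n$: the triviality of its center and the uniqueness of its index-$2$ subgroup. Throughout I write $H=\{1,\sigma\}$ for the order-$2$ subgroup in question, so that $\sigma$ is an involution.

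First I would dispose of normality. Since conjugation preserves the order of an element, for any $\tau\in S_n$ the conjugate $\tau\sigma\tau^{-1}$ is again an involution, hence lies in $H$ only if it equals $\sigma$. Thus $H\triangleleft S_n$ would force $\tau\sigma\tau^{-1}=\sigma$ for all $\tau$, i.e. $\sigma\in Z(S_n)$; but $Z(S_n)=\{1\}$ for $n\ge 3$, so $H$ is never normal. Alternatively one may simply exhibit a conjugate $\sigma'\neq\sigma$ of the same cycle type, which is possible for every involution once $n\ge 3$, and this already breaks normality.

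Next I would observe that any normal complement $N$ of $H$ must have order $[S_n:H]=n!/2$, that is, $N$ is a normal subgroup of index $2$. The only nonroutine step is to pin down such subgroups, and here I would invoke that $A_n$ is the unique index-$2$ subgroup of $S_n$: an index-$2$ subgroup is the kernel of a surjection $S_n\to\Z/2\Z$, and since $S_n$ is generated by transpositions, all necessarily sent to the nonzero class, this surjection can only be the sign homomorphism, whose kernel is $A_n$. Hence any normal complement of $H$ must equal $A_n$. I would remark that this argument is uniform in $n\ge 3$ and is in particular undisturbed by the extra normal subgroup $V_4\triangleleft S_4$, which has index $6$ and thus cannot arise as a complement.

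Finally I would check when $A_n$ actually is a complement of $H$. Being normal of index $2$, $A_n$ is a complement exactly when $A_n\cap H=\{1\}$ and $A_nH=S_n$, and both conditions are equivalent to $\sigma\notin A_n$, i.e. to $\sigma$ being an odd permutation: if $\sigma$ is odd then $A_n\sigma$ is the nontrivial coset, so $A_n\cup A_n\sigma=S_n$ and the intersection is trivial, exhibiting $A_n$ as normal complement; if $\sigma$ is even then $\sigma\in A_n$, so $A_n\cap H=H\neq\{1\}$, and since $A_n$ is the only candidate no normal complement can exist. This yields the stated equivalence and identifies the complement as $A_n$. The one mildly delicate point is the uniqueness of the index-$2$ subgroup; the remainder is a direct coset computation.
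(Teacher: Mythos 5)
Your proof is correct and complete. The paper actually states this lemma without proof (it appears among the ``useful generic results'' in the introduction), and your argument is precisely the standard one being implicitly invoked: non-normality via the triviality of $Z(S_n)$ for $n\ge 3$, identification of any normal complement as an index-$2$ subgroup, uniqueness of that subgroup (the sign homomorphism argument, which correctly handles the exceptional normal subgroup of $S_4$ since it has index $6$), and the parity criterion for $A_n\cap H$ to be trivial. Nothing is missing.
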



\section{Hopf Galois in degree 4}

In the case $n=4$, the character of $K/k$, for each possible
Galois group $G$, is given in \cite{GP}, theorem 4.6. We list the
results in the following table

\begin{center}
\begin{tabular}{l | c | l }
$G$ & $|G|$  & $K/k$\\
\hline
\hline
$C_4$ & 4  & Galois\\
$C_2\times C_2$ & 4  &  Galois\\
$D_{2\cdot 4}$ & 8  & almost classically Galois\\
$A_4$  &12  & almost classically Galois\\
$S_4$  &24  & almost classically Galois\\
\end{tabular}
\end{center}

Now we are interested in intermediate fields $K\subset F \subset
\wk$ and the Hopf Galois condition  for $F/k$. We only have
nontrivial intermediate fields when $\Gal(\wk/k)=S_4$ and then
$G'=\Gal(\wk/K)$ is isomorphic to $S_3$. Since its order 2
elements are transpositions, subgroups of order 2 of $G'$ have
normal complement $A_4$ and the corresponding extensions are
almost classically Galois.

Let us consider now the order 3 subgroup $G''$ of $G'$ and the
fixed field $F=\wk^{G''}$. Since $S_4$ has no normal subgroups of
order 8,  $G''$ has no normal complement in $S_4$ and $F/k$ is not
almost classically Galois. Let us see if it is Hopf Galois by
looking for a  group $N$ of order 8 such that $G\subseteq
\Hol(N)$. Here, $G$ is identified with a transitive subgroup of
$S_8$ through the action by left multiplication on the set of left
cosets $G/G''$. In fact, any transitive subgroup of $S_8$
isomorphic to $S_4$ will be a conjugate of $G$, since $S_4$ has a
unique conjugacy class of elements (and subgroups) of order $3$.
Therefore, in order to have the Hopf Galois condition, it is
enough to see that for some $N$, the holomorph $\Hol(N)$ has a
transitive subgroup $G_1$ isomorphic to $S_4$.

We have five possible abstract groups $N$, namely the abelian groups  $C_8$, $C_2\times C_4$, $C_2\times C_2\times C_2$, the dihedral group $D_{2\cdot 4}$
and the quaternion group $H_8$. If we look for holomorphs having order divisible by 24, we are left with $C_2\times C_2\times C_2$ and $H_8$.

If we take $N=H_8$, then $\Hol(N)$ has no transitive subgroups isomorphic to $S_4$. However, for $N=C_2\times C_2\times C_2$ there are
such subgroups. For example, if
$$
N=\langle(1, 6)(2, 7)(3, 5)(4, 8),  (1, 4)(2, 3)(5, 7)(6, 8),(1, 3)(2, 4)(5, 6)(7, 8)\rangle, $$
 then its normalizer in $S_8$ is

 $$\Hol(N)= \langle  N,(2, 3)(5, 7),
    (2, 7)(3, 5),  (2, 4)(7, 8), (2, 6)(3, 5, 8, 4)\rangle,$$
which has a transitive subgroup isomorphic to $S_4$, namely $$
G_1=\langle(1, 7, 4, 2)(3, 6, 5, 8),(1, 2)(3, 7)(4, 6)(5,
8)\rangle. $$ Therefore $F/k$ is a Hopf Galois extension. We have
obtained the following result.

\begin{prop}
Let $K/k$ be a separable extension of degree 4 with Galois closure
$\wk$. If $\Gal(\wk/k)\simeq S_4$  and $F$ is a  field with
 $K\subset F\subset \wk$, then $F/k$ is Hopf Galois. For $[F:k]=12$ (resp. $[F:k]=8$),
 it is (resp. is not) almost classically
 Galois.
\end{prop}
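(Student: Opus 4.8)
The plan is to recast the problem group-theoretically through the Galois correspondence and then dispose of each possible degree of $F$ in turn, using Theorem 1.3 to settle the almost classically Galois property and reserving Byott's reformulation (Theorem 1.5) for the one case where no normal complement exists.

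First I would write $G=\Gal(\wk/k)\simeq S_4$ and $G'=\Gal(\wk/K)$, the latter being a point stabilizer of order $6$, hence isomorphic to $S_3$. By the Galois correspondence the fields $F$ with $K\subseteq F\subseteq\wk$ are precisely the fixed fields $F=\wk^{H}$ of subgroups $H$ with $1\le H\le G'$, and $[F:k]=[G:H]=24/|H|$. Since the subgroups of $S_3$ have orders $1,2,3,6$, the possible degrees are $24,12,8,4$. The two extreme cases need no further work: $H=1$ gives $F=\wk$, which is Galois and hence Hopf Galois, while $H=G'$ gives $F=K$, already listed as almost classically Galois (and so Hopf Galois) in the degree-$4$ table.

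For $[F:k]=12$ the subgroup $H$ has order $2$, so its nontrivial element is an order-$2$ element of the point stabilizer $S_3$, that is, a transposition of $S_4$ and therefore an odd permutation. Lemma 1.3, applied inside $S_4$, then gives $H$ the normal complement $A_4$; by Theorem 1.3 the extension $F/k$ is almost classically Galois, and by Theorem 1.4 it is Hopf Galois.

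The remaining degree $[F:k]=8$ is the crux. Here $H$ is the unique order-$3$ subgroup of $S_3$, generated by a $3$-cycle. A normal complement of $H$ in $S_4$ would be a normal subgroup of order $8$; as the only normal subgroups of $S_4$ are $1$, the Klein four-group, $A_4$ and $S_4$, none has order $8$, so no normal complement exists and $F/k$ is not almost classically Galois. To prove it is nevertheless Hopf Galois I would appeal to Byott's reformulation (Theorem 1.5): it suffices to find a group $N$ of order $8$ whose holomorph $\Hol(N)\subseteq S_8$ contains a transitive subgroup isomorphic to $S_4$. Because $S_4$ has a single conjugacy class of order-$3$ subgroups, the embedding $S_4\hookrightarrow\Sym(S_4/H)\simeq S_8$ is determined up to conjugacy, so any transitive copy of $S_4$ inside some $\Hol(N)$ does the job. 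Among the five isomorphism types of groups of order $8$, only those with $|\Hol(N)|$ divisible by $24$ can contain such a subgroup, which leaves $C_2\times C_2\times C_2$ and the quaternion group $H_8$; a direct inspection eliminates $H_8$ but exhibits the explicit transitive $S_4$ inside $\Hol(C_2\times C_2\times C_2)$ displayed above, completing the proof. The main obstacle is exactly this final verification, which is computational in nature and rests on the explicit generators for $N$, $\Hol(N)$ and the witness subgroup $G_1$.
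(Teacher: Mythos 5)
Your proposal is correct and follows essentially the same route as the paper: transpositions give the normal complement $A_4$ for the degree-$12$ case, the absence of a normal subgroup of order $8$ in $S_4$ rules out the almost classically Galois property in degree $8$, and Byott's reformulation together with the uniqueness of the conjugacy class of order-$3$ subgroups of $S_4$ reduces the problem to exhibiting a transitive copy of $S_4$ inside $\Hol(C_2\times C_2\times C_2)$. The paper completes exactly this final computational step with explicit generators for $N$, $\Hol(N)$ and the transitive subgroup $G_1$, which is the verification your plan defers to.
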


\begin{coro}\label{cor} Let $f \in \Q[X]$ be an irreducible polynomial of
degree 4, with Galois group $S_4$ and $x$ be a root of $f$ in a
splitting field of $f$. Then the extension $\Q(x,\sqrt{d})/\Q$,
where $d$ denotes the discriminant of $f$,  is a degree 8 Hopf
Galois extension which is not almost classically Galois.
\end{coro}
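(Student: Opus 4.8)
The plan is to recognize $\Q(x,\sqrt d)$ as exactly the degree-$8$ intermediate field $F=\wk^{G''}$ studied immediately before the Proposition, and then simply invoke the $[F:k]=8$ branch of that Proposition. So essentially all the work is a Galois-correspondence identification, after which the conclusion is automatic.

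First I would fix the Galois setup. Writing $\wk$ for a splitting field of $f$, the hypothesis gives $\Gal(\wk/\Q)\simeq S_4$ acting faithfully on the four roots of $f$, and $\Q$ plays the role of $k$. Taking $x$ to be one of the roots, $K=\Q(x)$ is the fixed field of its stabilizer $G'=\Gal(\wk/K)$, which is the copy of $S_3$ permuting the other three roots, exactly as in the Notation and the preamble to the Proposition. Next I would bring in the discriminant: by the classical fact that $\sqrt d=\prod_{i<j}(x_i-x_j)$ is fixed by a permutation precisely when that permutation is even, we get $\Q(\sqrt d)=\wk^{A_4}$; since $G=S_4\neq A_4$, the element $d$ is a nonsquare and this is a genuine quadratic extension.

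Then I would take the compositum. By the Galois correspondence,
$$
\Q(x,\sqrt d)=\wk^{G'\cap A_4}.
$$
Here $G'\simeq S_3$ is the symmetric group on the three roots other than $x$, and intersecting it with $A_4$ retains exactly its even elements, namely the cyclic group of order $3$ generated by the relevant $3$-cycle. This is precisely the order-$3$ subgroup $G''$ of $G'$ from the discussion preceding the Proposition, so $\Q(x,\sqrt d)=\wk^{G''}=:F$. Consequently $[F:\Q]=[S_4:G'']=8$. The containment $K\subset F$ is strict because $G'\not\subseteq A_4$ (it contains transpositions), so $\sqrt d\notin K$; and $F\subsetneq\wk$ since $G''\neq 1$. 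Thus $K\subset F\subset\wk$ with $[F:\Q]=8$, and the Proposition gives that $F/\Q$ is Hopf Galois but not almost classically Galois, which is exactly the corollary.

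The only genuine content is the identification $G'\cap A_4=G''$, and I expect this to be the one place needing care: one must confirm that intersecting an $S_3$-stabilizer with $A_4$ yields the order-$3$ cyclic group $A_3$ and not some other subgroup. This is forced by $S_3\cap A_4=A_3$, so the obstacle is mild; everything else is routine Galois theory and a degree count.
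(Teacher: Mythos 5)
Your proposal is correct and matches the paper's (implicit) argument: the Corollary is exactly the identification of $\Q(x,\sqrt d)$ with the fixed field $F=\wk^{G''}$ of the unique order-$3$ subgroup $G''\subset G'\simeq S_3$ studied just before the Proposition, followed by an appeal to the $[F:k]=8$ case of that Proposition. Your Galois-correspondence computation $\Q(x,\sqrt d)=\wk^{G'\cap A_4}=\wk^{G''}$ is precisely the step the paper leaves to the reader, and it is carried out correctly.
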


\begin{example}{\rm We give now an explicit example of a degree 8 extension  $F/\Q$ as in Corollary \ref{cor}. We consider
the irreducible polynomial $f=X^4+X+1\in\Q[X]$ and denote by $\wk$
its splitting field. We have $\Gal(\wk/\Q)\simeq S_4$ and
$\disc(f)=229$. Then $F=\Q(x, \sqrt{229})$, for $x$ a root of $f$
in $\wk$. By using Cardano's formulas, and making a choice for
$x$, we obtain

$$F=\Q(\sqrt{u-v}+\sqrt{\omega u-\omega^2 v}+\sqrt{\omega^2u-\omega
v}\, , \sqrt{229}\,), $$ where

$$u=\sqrt[3]{\frac12+\frac16 \sqrt{\frac{-229}{3}}} \qquad , \qquad
v= \omega \sqrt[3]{\frac{-1}{2}+\frac16 \sqrt{\frac{-229}{3}}}$$
and $\omega$ is a primitive cubic root of unity.

Let us note that the example of a Hopf Galois non almost
classically Galois extension given in \cite{GP} is a degree 16
extension of a quadratic number field. }
\end{example}

\section{Hopf Galois in degree 5}

In the case $n=5$, the character of $K/k$, for each possible
Galois group $G$, is given in \cite{GP}, theorem 4.6. We list the
results in the following table

\begin{center}
\begin{tabular}{l | c | l }
$G$ & $|G|$ & $K/k$\\
\hline
\hline
$C_5$ &5  & Galois\\
$D_{2\cdot 5}$ & 10& almost classically Galois\\
$F_5$ & 20& almost classically Galois\\
$A_5$ &60 & not Hopf Galois\\
$S_5$ &120 & not Hopf Galois\\
\end{tabular}
\end{center}

We note that in \cite{GP} the possibility is included of a Galois
group of order 15 which  in fact does not occur.

Now we are interested in cases where $K/k$ is Hopf Galois and we
consider intermediate fields $K\subset F \subset \wk$ in order to
determine if $F/k$ is a Hopf Galois extension. We will only have
non trivial intermediate fields when $G$ is isomorphic to the
Frobenius group $F_5$. Since in this case we have $[\wk:K]=4$, the
extensions $F/k$ under consideration have degree 10 and then
$\Gal(\wk/F)$ has order 2. The Frobenius group $F_5$ has one
(normal) subgroup of order 10 which contains all the order 2
elements of $F_5$. Therefore, $\Gal(\wk/F)$ has no normal
complement in $G$. Hence, $F/k$ is not almost classically Galois.

We are again in the situation when $G$ has a unique conjugation
class of subgroups isomorphic to $C_2$ and $S_{10}$ has a unique
conjugation class of transitive subgroups isomorphic to $G$.
Therefore, in order to check the Hopf Galois condition for $F/k$,
it suffices to find a regular subgroup $N\subset S_{10}$ such that
$\Hol(N)$ has a transitive subgroup isomorphic to $F_5$.

Since $N$ must have order 10, it can be $N\simeq C_{10}$, in which
case, $\Hol(N)$ has order 40, or $N\simeq D_{2\cdot 5}$, and then
$\Hol(N)$ has order 200. Let us take
$$N_1=\langle (1,2,3,4,5,6,7,8,9,10) \rangle\subset S_{10}.$$
Its normalizer is $ \Hol(N_1)=\langle  N_1,
    (2, 4, 10, 8)(3, 7, 9, 5)\rangle
$ whose transitive subgroup $\langle(1, 2, 5, 4)(3, 8)(6, 7, 10,
9), (1, 3, 5, 7, 9)(2, 4, 6, 8, 10)\rangle$ is isomorphic
to~$F_5$.

If we take the regular group
$$N_2=\langle (1, 7)(2, 8)(3, 4)(5, 9)(6, 10),(1, 6, 4, 9, 8)(2, 5, 3, 10, 7)\rangle\subset S_{10},$$
which is isomorphic to  $D_{2\cdot 5}$, and its normalizer
$$
\Hol(N_2)=\langle (1, 7)(2, 8)(3, 4)(5, 9)(6, 10),
    (2, 5, 3, 10, 7),
    (2, 3, 5, 7)(4, 8, 9, 6)  \rangle
$$
we find the subgroup $\langle (1, 7, 8, 5)(2, 4)(3, 9, 10, 6), (1, 4, 8, 6, 9)(2, 10, 5, 7, 3)\rangle$
which is also transitive and isomorphic to $F_5$.

\begin{prop}
Let $K/k$ be a separable extension of degree 5 and $\wk/k$ its
Galois closure. If $\Gal(\wk/k)$ has order 20 and $F$ is a field
with
 $K\varsubsetneq F\varsubsetneq \wk$, then the degree 10 extension $F/k$ is Hopf Galois but not almost classically Galois.
This extension has at least two different Hopf Galois structures.
\end{prop}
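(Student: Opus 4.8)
The plan is to reduce everything to finite group theory through Byott's reformulation (Theorem~1.5) and the Greither--Pareigis criteria (Theorems~1.2 and~1.3). First I would record the group data. Since $[K:k]=5$ and $|G|=20$, the group $G=\Gal(\wk/k)$ is the Frobenius group $F_5=C_5\rtimes C_4$, and the point stabilizer $G'=\Gal(\wk/K)$ is its cyclic subgroup of order $4$. A field $F$ with $K\varsubsetneq F\varsubsetneq\wk$ corresponds to a subgroup $G_F=\Gal(\wk/F)$ with $1\varsubsetneq G_F\varsubsetneq G'$; as $C_4$ has a unique proper nontrivial subgroup, $G_F$ is forced to be the order-$2$ subgroup of $G'$. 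This shows $F$ is unique and $[F:k]=10$. A short check that all five involutions of $F_5$ are conjugate shows that $G_F$ has trivial core, so $\wk$ is the Galois closure of $F/k$ and we may apply the criteria to the action of $G$ on the ten cosets $G/G_F$.

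For the \emph{almost classically Galois} question I would use the normal-complement criterion (Theorem~1.3) applied to $F/k$: it is almost classically Galois if and only if $G_F$ has a normal complement in $G$. Now $F_5$ has a unique subgroup of order $10$, since such a subgroup must contain the normal Sylow $5$-subgroup $C_5$ and $C_4$ has a unique subgroup of order $2$; thus it equals $D_{2\cdot 5}=C_5\rtimes C_2$, and a direct inspection shows it contains all five involutions of $F_5$. Any complement of $G_F$ of order $10$ would therefore have to be this $D_{2\cdot 5}$, but $D_{2\cdot 5}\supseteq G_F$, so the intersection is nontrivial. Hence $G_F$ has no normal complement and $F/k$ is not almost classically Galois.

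For the \emph{Hopf Galois} property I would run the algorithmic procedure after Theorem~1.5. The decisive reduction is that $G$ has a single conjugacy class of subgroups isomorphic to $C_2$ and $S_{10}$ has a single conjugacy class of transitive subgroups isomorphic to $F_5$ (both realizing the coset action $G/G_F$). Consequently it suffices to find one regular subgroup $N\subset S_{10}$ whose holomorph $\Hol(N)=\Norm_{S_{10}}(N)$ contains some transitive $G_1\cong F_5$: conjugating $G_1$ to $\lambda(G)$ by an element $\sigma\in S_{10}$ then carries $N$ to a regular subgroup normalized by $\lambda(G)$, which is exactly the condition of Theorem~1.2. As $|N|=10$, the only possibilities are $N\cong C_{10}$ (with $|\Hol(N)|=40$) and $N\cong D_{2\cdot 5}$ (with $|\Hol(N)|=200$); I would exhibit both holomorphs by generators and point to an explicit transitive $F_5$ inside each, as done above with $N_1$ and $N_2$. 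Since $C_{10}$ and $D_{2\cdot 5}$ are non-isomorphic, the two regular subgroups are non-conjugate in $S_{10}$ and furnish two distinct Hopf Galois structures.

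The conceptual steps---pinning down $G_F$, excluding a normal complement, and the conjugacy-class reduction---are routine once the structure of $F_5$ is in hand. The genuine work, and the main obstacle, is the explicit verification that $\Hol(N_1)$ and $\Hol(N_2)$ really do contain transitive copies of $F_5$; this is a finite but non-transparent computation best carried out by computer, and it is what turns the reduction into a proof.
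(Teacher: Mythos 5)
Your proposal is correct and follows essentially the same route as the paper: rule out a normal complement using the unique order-$10$ subgroup of $F_5$, which contains all five involutions, and then reduce the Hopf Galois check (via the single conjugacy class of order-$2$ subgroups of $G$ and the single conjugacy class of transitive $F_5$'s in $S_{10}$) to exhibiting transitive copies of $F_5$ inside $\Hol(C_{10})$ and $\Hol(D_{2\cdot 5})$, the two regular groups of order $10$. The only difference is that the paper finishes by writing down explicit generators for both holomorphs and for transitive $F_5$ subgroups inside them, whereas you defer that finite verification to a computer; your added details (uniqueness of $F$, triviality of the core of $\Gal(\wk/F)$, and the conjugation argument transporting $N$ to a subgroup normalized by $\lambda(G)$) are points the paper leaves implicit.
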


We consider now the cases in which $K/k$ is not Hopf Galois and we
want to compute the smallest degree $[F:k]$ for fields $F$ such
that $K\subset F \subseteq \wk$ and $F/k$ is Hopf Galois.

\begin{prop}
Let $K/k$ be a separable extension of degree 5 which is not Hopf
Galois, $\wk/k$ its Galois closure. The smallest degree $[F:k]$
for fields $F$ such that $K\subset F \subseteq \wk$ and $F/k$ is
Hopf Galois is $60$. More precisely, this smallest degree is
attained for $F=\wk$ in the case $G=A_5$ and for an almost
classically Galois extension $F/k$ in the case $G=S_5$ .
\end{prop}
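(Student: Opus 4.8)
The plan is to transport the whole question into the subgroup lattice of $G$ via the Galois correspondence for $\wk/k$. A field $F$ with $K\subseteq F\subseteq\wk$ corresponds to a subgroup $H$ with $H\subseteq G'$, and $[F:k]=[G:H]$, so minimizing the degree means maximizing $|H|$ among those $H\subseteq G'$ for which $F/k$ is Hopf Galois. Since $K/k$ is not Hopf Galois, the table of this section forces $G\simeq A_5$ (so $G'\simeq A_4$) or $G\simeq S_5$ (so $G'\simeq S_4$). Listing the subgroups of $A_4$ and $S_4$ by order, the possible values of $[F:k]$ are $15,20,30,60$ in the first case (with $K$ itself giving $5$ and $\wk$ giving $60$), and $10,15,20,30,40,60,120$ in the second. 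I would first show that $60$ is attainable, and then that no smaller value is.

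For attainability: if $G\simeq A_5$, take $H=1$, so that $F=\wk$ is Galois over $k$, hence Hopf Galois, of degree $60$. If $G\simeq S_5$, take $H=\langle\tau\rangle$ for a transposition $\tau\in G'\simeq S_4$. As an element of $S_5$ the permutation $\tau$ is odd, so by the lemma on order-$2$ subgroups of $S_n$ (Lemma 1.3) the group $H$ has normal complement $A_5$ in $G$; by Theorem 1.3 the degree-$60$ extension $F=\wk^H$ is then almost classically Galois, in particular Hopf Galois. This yields both the upper bound and the stated description of the minimizer.

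For the lower bound I would rule out every smaller degree using Byott's reformulation (Theorem 1.5): if $F/k$ were Hopf Galois there would be a group $N$ with $|N|=[G:H]$ and an embedding $G\hookrightarrow\Hol(N)$; since $A_5\subseteq G$ in both cases, this forces $A_5\hookrightarrow\Hol(N)$. For every group $N$ of order $m\in\{10,15,20,30\}$ both $N$ and $\Aut(N)$ are solvable, so $\Hol(N)=N\rtimes\Aut(N)$ is solvable and cannot contain the non-solvable group $A_5$; this disposes of all candidate degrees below $40$ (and of the degrees $15,20,30$ in the $A_5$ case).

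The main obstacle is the remaining degree $40$ for $G\simeq S_5$, where $\Hol(N)$ need not be solvable — already $N=C_2\times C_2\times C_2\times C_5$ has $\Aut(N)$ containing $\GL_3(\F_2)$ — so the crude solvability argument breaks down and I would argue more finely. Suppose $A_5\hookrightarrow\Hol(N)=N\rtimes\Aut(N)$ with $|N|=40$. Because $A_5$ is simple and $|N|=40<60$, the subgroup $A_5$ meets $N$ trivially, so the quotient map yields $A_5\hookrightarrow\Aut(N)$. The unique Sylow $5$-subgroup $C_5\trianglelefteq N$ is characteristic and, by Schur--Zassenhaus, $N/C_5$ is isomorphic to the Sylow $2$-subgroup $P$ of order $8$; restriction to $C_5$ together with passage to the quotient gives a homomorphism $\Aut(N)\to\Aut(C_5)\times\Aut(P)=C_4\times\Aut(P)$ whose kernel, the stability group of the chain $1<C_5<N$, has order dividing $5$. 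Simplicity of $A_5$ then produces successively $A_5\hookrightarrow C_4\times\Aut(P)$ and $A_5\hookrightarrow\Aut(P)$; but for a group $P$ of order $8$ one has $|\Aut(P)|\in\{4,8,24,168\}$, none divisible by $60$, a contradiction. Hence no $F/k$ of degree $40$ is Hopf Galois either, and the smallest Hopf Galois degree is exactly $60$ in both cases, attained as described.
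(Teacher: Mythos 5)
Your proposal is correct, and its skeleton coincides with the paper's proof: both translate the problem into the subgroup lattice of $G$, kill all candidate degrees through $30$ by showing that every group $N$ of order $10,15,20,30$ has solvable holomorph (so $\Hol(N)$ cannot contain $A_5\subseteq G$), and realize degree $60$ exactly as you do --- $F=\wk$ when $G\simeq A_5$, and $F=\wk^{\langle\tau\rangle}$ for a transposition $\tau\in G'$, whose normal complement is $A_5$, when $G\simeq S_5$. One small remark: you assert the solvability of the relevant automorphism groups, whereas the paper verifies it group by group (e.g.\ $\Aut(C_5\times C_2\times C_2)\simeq C_4\times S_3$, $|\Aut(D_{2\cdot 10})|=40$, etc.); the assertion is true and finitely checkable, so this is a matter of detail, not a gap. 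The genuine divergence is at degree $40$ in the $S_5$ case. The paper consults the small-groups database: of the $14$ isomorphism classes of groups of order $40$, only $C_2\times C_2\times C_2\times C_5$ has a non-solvable holomorph, and it is then checked (computationally) that this holomorph has no subgroup isomorphic to $A_5$. You replace this with a structural argument valid for every $N$ of order $40$ at once: simplicity and $|N|<60$ give $A_5\cap N=1$, hence $A_5\hookrightarrow\Aut(N)$; the Sylow $5$-subgroup $C_5\trianglelefteq N$ is characteristic and Schur--Zassenhaus gives $N/C_5\simeq P$ with $|P|=8$; the kernel of $\Aut(N)\to\Aut(C_5)\times\Aut(P)$ indeed has order dividing $5$ (it is the group of cocycles $Z^1(P,C_5)$, which equals the coboundaries $B^1(P,C_5)\subseteq C_5$ since $H^1(P,C_5)=0$ by coprimality --- even the weaker fact that this stability group is nilpotent would do); simplicity of $A_5$ then forces $A_5\hookrightarrow\Aut(P)$, which is impossible because $|\Aut(P)|\in\{4,8,24,168\}$ for the five groups $P$ of order $8$ and none of these is divisible by $5$. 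This buys machine independence, and it also spares you the paper's enumeration of the $13$ isomorphism classes of groups of order $60$, which your direct attainability construction makes unnecessary. Both routes prove the proposition; the paper's is quicker to state given computer access, yours is self-contained.
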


\begin{proof} We consider separately the two possibilities for the Galois
group of~$\wk/k$.

\noindent {\bf First case: $\Gal(\wk/k)=A_5$.}

In this case we have $G'=\Gal(\wk/K)\simeq A_4$ 
and the nontrivial
possibilities for $\Gal(\wk/F)$ are subgroups isomorphic to   $V_4, C_3,C_2$.
We will have degrees
$
m=[F:k]=15,20,30,
$
respectively. In order to have the  Hopf Galois condition there should be a regular subgroup $N\subset S_m$ such that
$\Hol(N)$ has a subgroup isomorphic to $A_5$.

Although some cases of degree 20 could be ruled out just because
the order of the holomorph is not divisible by 60, we are going to
show that all groups of order $15,20$ or $30$ have a solvable
holomorph. Since all groups of these orders are solvable, it is
enough to check that the automorphism groups are solvable.

There is only one group of order 15 (modulo isomorphism), the cyclic one. Its automorphism group has order $\varphi(15)=8$ and is solvable.

Regarding the five groups of order 20, we have $\Aut(C_{20})\simeq
C_8$, $\Aut(C_5\times C_2\times C_2)\simeq C_4\times S_3$,
$\Aut(F_5)\simeq \operatorname{Inn}(F_5)\simeq F_5$ and
$|\Aut(D_{2\cdot 10})|=10\varphi(10)=40$. The remaining group $
N=\langle a,b\mid a^5=b^4=1, \ aba=b\rangle $ has also an
automorphism group of order $40$, since sending $a$ to one of the
four elements of order $5$ and $b$ to one of the ten elements of
order 4 uniquely determines an automorphism of $N$. Therefore, all
groups of order 20 have a solvable automorphism group (of order
$\le 40$).

Finally, we have four different groups of order 30: $C_{30},\
S_3\times C_5, D_{2\cdot 5}\times C_3$ and $D_{2\cdot 15}$. We
have $|\Aut(C_{30})|= \varphi(30)=8$ and $|\Aut(D_{2\cdot
15})|=15\varphi(15)=120$.  In fact,
$\Aut(D_{15})=\operatorname{Aff}(\Z/15\Z)$. On the other hand,
$\Aut(S_3\times C_5)=S_{3}\times C_4$ and $|\Aut(D_{2\cdot
5}\times C_3)|=40$, with $\operatorname{Inn}(D_{2\cdot 5}\times
C_3)\simeq D_{2\cdot 5}$. In all cases, the automorphism group is
solvable.

Since there are no proper intermediate extensions satisfying the
Hopf Galois condition, the minimal Hopf Galois extension we were
looking for is the Galois closure itself.

\vspace{0.2cm} \noindent {\bf Second case $\Gal(\wk/k)=S_5$.}

Now $G'=\Gal(\wk/K)\simeq S_4$ and the nontrivial possibilities
for the subgroup $\Gal(\wk/F)$ are $A_4,D_8,S_3,V_4, C_4,C_3,C_2$.
We have $ m=[F:k]=10,15,20,30,30,40,60, $ respectively, and
checking the Hopf Galois condition leads us to the search of
regular subgroups $N\subseteq S_m$ such that $\Hol(N)$ contains a
subgroup isomorphic to $S_5$. The possibilities $m=10,15,20,30$
are ruled out since we have already seen that the groups of these
orders have a solvable holomorph. We are left with groups $N$ of
order 40 or 60.

According to the databases of small groups, there are 14 isomorphism classes of groups of order 40. They are
named in Magma as $\mathtt{SmallGroup(40,i)}$ for $1\le i\le 14$. There is only one having a non solvable holomorph: it corresponds to $i=14$,
which gives the group
$C_2\times C_2\times C_2\times C_5$. The nonsolvability of the holomorph is due to its subgroup $\Aut(C_2\times C_2\times C_2)$, a simple group
of order $168$. But this holomorph does not have subgroups isomorphic to $A_5$.

On the other hand, there are $13$ isomorphism classes of groups of order $60$: $A_5$, $A_4\times C_5$, six non-isomorphic semidirect products $C_{15}\rtimes C_4$ and
five non-isomorphic semidirect products $C_{15}\rtimes V_4$. The only one having a non solvable holomorph is $A_5$, where $\Hol(A_5)=A_5\rtimes \Aut(A_5)\simeq A_5\rtimes S_5$. In degree 60 we can have a Hopf Galois extension $F/k$ attached to a regular subgroup $N\simeq A_5$.

In fact, since $G'\subset S_5$ is the stabilizer of a point,  there exists
a subgroup $C$ of $\Gal(\wk/K)$ generated by a transposition, and  we can take the fixed field $F=\wk^{C}$.
Then, $\Gal(\wk/F)$ has a normal complement $N\simeq A_5$ in $\Gal(\wk/k)$.
In other words, the extension $F/k$ is almost classically Galois.
\end{proof}

\section{Hopf Galois in degree 6}

In this section we consider a separable field extension  $K/k$ of
degree $6$. The possible groups $G=\Gal(\wk/k)$ are the transitive
permutation groups of degree 6, which are listed in the following
table. We have kept the essential information given in the naming
scheme developed in \cite{CHM}. We may also refer to these groups
as in Magma language, namely {\tt TransitiveGroup(6, i)}, or $6Ti$
for short.

We have collected here the results on the Hopf Galois condition
which are proved in this section. We will be interested in
intermediate fields within $K$ and $\wk$, so we have separated the
first five cases, where there are no proper intermediate fields.

\begin{center}

\medskip
\begin{tabular}{l|l | c | l }
& $G$ & $|G|$ & $K/k$\\
\hline
\hline
$6T1$&$C_6$&6  & Galois\\
$6T2$&$S_3$&6 & Galois\\
$6T3$&$D_{2\cdot 6}$ &12 & almost classically Galois\\
$6T4$&$A_4$ &12 & not Hopf Galois \\
$6T5$&$F_{18}$&18 & almost classically Galois\\
\hline
$6T6$&$2A_4$ &24 & not Hopf Galois\\
$6T7$&$S_4(6d )$ &24 & not Hopf Galois\\
$6T8$&$S_4(6c)$ &24 & not Hopf Galois\\
$6T9$&$F_{18} : 2$&36 & almost classically Galois\\
$6T10$&$F_{36}$ &36 & not Hopf Galois\\
$6T11$&$2S_4$ &48 & not Hopf Galois\\
$6T12$&$A_5$ &60 & not Hopf Galois\\
$6T13$&$F_{36}: 2$&72 & not Hopf Galois\\
$6T14$&$S_5$ &120 & not Hopf Galois\\
$6T15$&$A_6$ &360 & not Hopf Galois\\
$6T16$&$S_6$ &720 & not Hopf Galois\\
\end{tabular}
\end{center}

For an extension $K/k$ of degree 6, a Hopf Galois structure comes
from a group $N$ of order 6. By considering the holomorphs of
these groups some cases become very easy to decide.

\begin{prop} Let $K/k$ be a separable extension of degree 6 and $\wk/k$ its
Galois closure.
\begin{enumerate}
\item If $[\wk:k]=24$ or $[\wk:k]>36$, then $K/k$ is not Hopf Galois.
\item If $\Gal(\wk/k)\simeq A_4$, then $K/k$ is not Hopf Galois
\end{enumerate}
\end{prop}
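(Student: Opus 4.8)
By Byott's reformulation together with the algorithmic procedure recalled above, $K/k$ is Hopf Galois precisely when $G=\Gal(\wk/k)$, acting through $G/G'$, embeds into the holomorph $\Hol(N)$ of some group $N$ of order $6$; for the negative statements it suffices to rule out even an abstract embedding $G\hookrightarrow\Hol(N)$. There are exactly two groups of order $6$, namely $C_6$ and $S_3$, so everything reduces to these two holomorphs. First I would record their orders and structure. Since $|\Aut(C_6)|=\varphi(6)=2$, we have $|\Hol(C_6)|=12$ and in fact $\Hol(C_6)\cong D_{2\cdot 6}$. Since $S_3$ is complete (trivial centre, $\Aut(S_3)=\operatorname{Inn}(S_3)\cong S_3$), we have $|\Hol(S_3)|=36$, and moreover $\Hol(S_3)\cong S_3\times S_3$: the left and right regular representations of $S_3$ both lie in the holomorph, they commute, they intersect trivially because $Z(S_3)=1$, and they already account for all $36$ elements.

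For part (1) the argument is purely numerical. If $G$ embeds into $\Hol(N)$ then $|G|$ divides $|\Hol(N)|\in\{12,36\}$. When $[\wk:k]=|G|=24$, neither $12$ nor $36$ is a multiple of $24$; when $[\wk:k]>36$, the order $|G|$ exceeds both $12$ and $36$. In either case no embedding into either holomorph is possible, so $K/k$ is not Hopf Galois.

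Part (2) carries the real content, since here $|G|=|A_4|=12$ divides $36$ and equals $12$, so the order bound alone decides nothing. The plan is to eliminate each holomorph separately. The group $\Hol(C_6)\cong D_{2\cdot 6}$ has order $12$, so an embedding would force $A_4\cong D_{2\cdot 6}$; but $D_{2\cdot 6}$ has an element of order $6$ (equivalently, a nontrivial centre) while $A_4$ has neither, so this is impossible. For $\Hol(S_3)\cong S_3\times S_3$ I would use the two coordinate projections $\pi_1,\pi_2\colon S_3\times S_3\to S_3$. If $A_4$ were a subgroup, each $\pi_i(A_4)$ would be at once a subgroup of $S_3$ and a quotient of $A_4$; as the only normal subgroups of $A_4$ are $1$, $V_4$, $A_4$, its quotients are $A_4$, $C_3$, $1$, and of these only $C_3$ and $1$ embed into $S_3$. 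Hence $A_4\subseteq\pi_1(A_4)\times\pi_2(A_4)$ would sit inside a group of order at most $3\cdot3=9<12$, a contradiction. Thus $A_4$ embeds into neither holomorph and $K/k$ is not Hopf Galois.

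The one delicate point, and the step I expect to be the main obstacle, is the identification $\Hol(S_3)\cong S_3\times S_3$: once it is established the projection argument is immediate, whereas working directly inside $S_3\rtimes\Aut(S_3)$ (counting elements of each order or locating the Sylow subgroups) is noticeably more laborious. I would therefore isolate the completeness of $S_3$ and the resulting product decomposition as a short preliminary observation before treating part (2).
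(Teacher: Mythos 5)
Your proposal is correct and follows essentially the same route as the paper: both reduce to the two holomorphs $\Hol(C_6)\simeq D_{2\cdot 6}$ and $\Hol(S_3)\simeq S_3\times S_3$, settle part (1) by comparing orders, and settle part (2) by a projection argument inside $S_3\times S_3$. The only minor difference is the concluding step of part (2): the paper observes that one projection of an order-$12$ subgroup must be all of $S_3$, forcing a subgroup of order $6$, which $A_4$ lacks, whereas you bound both projections by order at most $3$ via the quotient structure of $A_4$ and get the contradiction $12\le 9$ --- a dual but equally valid finish.
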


\begin{proof} If $N$ is a group of order 6, it can be either cyclic or isomorphic to the symmetric group
$S_3$. Since $\Hol(C_6)\simeq D_{2\cdot 6}$ and $\Hol(S_3)\simeq S_3\times S_3$, none of them can have a subgroup
of order $24$ o bigger than $36$.

On the other hand, $\Hol(C_6)$ is not isomorphic to $A_4$ and for
an extension with Galois group isomorphic to $A_4$ we must look
inside $\Hol(S_3)$. If we assume that $G$ is a subgroup of order
12 of $S_3\times S_3$, then one of the projections of $G$ on the
components  has to be the whole $S_3$. Therefore $G$ has a
subgroup of order 6 and this proves $G\not\simeq A_4$.
\end{proof}

\begin{prop} Let $K/k$ be a separable extension of degree 6 and $\wk/k$ its
Galois closure. If $\Gal(\wk/k)$ is isomorphic to the dihedral
group $D_{2\cdot 6}$ or $F_{18}$ or $F_{18} : 2$, then $K/k$ is
almost classically Galois.
\end{prop}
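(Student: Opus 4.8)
The plan is to apply the characterization of almost classically Galois extensions recalled above: $K/k$ is almost classically Galois precisely when $G'=\Gal(\wk/K)$ admits a normal complement $N$ in $G=\Gal(\wk/k)$. Since $\wk/k$ is the Galois closure, the action of $G$ on $G/G'$ is faithful, so $G'$ is core-free and therefore not normal (it being nontrivial in each of our cases). As $[G:G']=6$, the point stabilizer $G'$ has order $2$, $3$ and $6$ in the three cases $D_{2\cdot 6}$, $F_{18}$ and $F_{18}:2$, respectively. In each case I would simply exhibit a normal subgroup $N\trianglelefteq G$ of order $6$ with $N\cap G'=1$; since then $|N|\cdot|G'|=|G|$, such an $N$ is automatically a complement, and the claim follows.

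For $G\simeq D_{2\cdot 6}$ the argument is immediate. Writing $D_{2\cdot 6}=\langle s,r\mid s^2=r^6=1,\ sr=r^{-1}s\rangle$, the point stabilizer $G'$ has order $2$; it is generated by a reflection rather than by the central involution $r^3$, because $\langle r^3\rangle$ is a nontrivial normal subgroup and, were it equal to $G'$, it would contradict the fact that $G'$ is core-free. Hence $G'$ is not normal, and the dihedral lemma of Section~1 applies directly, giving the cyclic normal complement $N=\langle r\rangle\simeq C_6$.

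For the remaining two groups I would first pin down the abstract isomorphism type forced by the existence of a faithful transitive degree-$6$ action, and then read the complement off a direct-product decomposition. A group of order $18$ possessing a core-free subgroup of order $3$ must be $G\simeq C_3\times S_3$: the genuine Frobenius group of order $18$, the generalized dihedral group of $C_3\times C_3$, has all four of its order-$3$ subgroups normal and so admits no faithful degree-$6$ action. Here $G'$ is a non-normal order-$3$ subgroup (a ``diagonal'' one), and the $S_3$ direct factor is a normal subgroup of order $6$ meeting $G'$ trivially, hence the desired normal complement. Likewise $F_{18}:2\simeq S_3\times S_3$ acts on the cosets of a diagonal copy $\Delta(S_3)=G'$, and either direct factor is a normal complement of order $6$, with $\Delta(S_3)\cap(S_3\times 1)=1$.

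The main obstacle is precisely the bookkeeping of the second step: identifying the correct isomorphism type and, above all, the exact position of the point stabilizer $G'$ inside $G$, so that $N\cap G'=1$ can be verified. It is tempting to invoke the Frobenius lemma of Section~1, but neither $C_3\times S_3$ nor $S_3\times S_3$ is a Frobenius group with $G'$ as complement, so the clean route is the direct exhibition of $N$ as a cyclic (respectively $S_3$) normal subgroup complementing $G'$. Once $G'$ has been located inside the direct-product structure, each verification $N\cap G'=1$ is a one-line check, and the three cases are settled.
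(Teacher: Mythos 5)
Your proposal is correct, and its overall strategy is the same as the paper's: in each case exhibit a normal subgroup $N$ of $G$ of order $6$ with $N\cap G'=1$, so that $N$ is a normal complement and the Greither--Pareigis criterion applies. For $D_{2\cdot 6}$ and for $F_{18}\simeq S_3\times C_3$ your argument essentially coincides with the paper's (the dihedral lemma of Section~1; the $S_3$ direct factor as complement), except that you make explicit the core-freeness of $G'$ coming from $\wk$ being the Galois closure, which the paper leaves implicit, while the paper gets $G'\cap N=1$ slightly differently: it notes that the unique order-$3$ subgroup of $N\simeq S_3$ is normal in $G$, hence cannot equal the non-normal $G'$. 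The only real divergence is the case $F_{18}:2\simeq S_3\times S_3$: the paper works inside the explicit permutation representation in $S_6$, writes down the two normal subgroups of order $6$ as permutation groups, and checks (in effect computationally) that each one is regular, hence a complement of a point stabilizer; you instead argue structurally that the core-free point stabilizer of order $6$ must be a diagonal copy $\Delta(S_3)$, so either direct factor meets it trivially. Your route is more conceptual and avoids explicit generators, but the step you defer as ``bookkeeping'' does require a short enumeration: one must check that every order-$6$ subgroup of $S_3\times S_3$ other than a diagonal contains $A_3\times 1$ or $1\times A_3$ (e.g.\ via Goursat's lemma), hence is not core-free; similarly, your classification claim for order $18$ rules out only the generalized dihedral group, and one should also dispose of $C_{18}$, $C_6\times C_3$ and $D_{18}$ (all immediate, since their order-$3$ subgroups are normal). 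These verifications are routine and true, so there is no gap, only detail left to the reader that the paper handles by direct computation.
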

\begin{proof}
If $G=\Gal(\wk/k)\simeq D_{2\cdot 6}$, then $G'=\Gal(\wk/K)$ is a
non normal subgroup of order 2 of $G$. Therefore, $G'$ has normal
complement the cyclic subgroup of $G$ of order $6$.

If $G=\Gal(\wk/k)\simeq F_{18}\simeq S_3\times C_3$, then
$G'=\Gal(\wk/K)$ is a non normal subgroup of order $3$ of $G$. The
group $G$ has a normal subgroup $N$ of order $6$ (isomorphic to
$S_3$) and the unique subgroup of $N$ of order 3 is also normal in
$G$. Therefore $G'\cap N=1$ and $N$ is a normal complement for
$G'$.

If $G=\Gal(\wk/k)\simeq F_{18}:2\simeq S_3\times S_3\simeq
\Hol(S_3)$,  consider $G$ as a transitive subgroup of $S_6$ and
$G'=\Gal(\wk/K)$ as the stabilizer of  a point. The two normal
subgroups of $G$ of order $6$ are $\langle (12)(36)(45), \
(153)(264)\rangle$ and $\langle(14)(25)(36), \ (135)(264)\rangle$.
Both of them are normal complements for an stabilizer $G'$.
\end{proof}

\begin{prop} Let $K/k$ be a separable extension of degree 6 and $\wk/k$ its
Galois closure. If $\Gal(\wk/k)=F_{36}$, then $K/k$ is not Hopf
Galois.
\end{prop}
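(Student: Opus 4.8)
The plan is to apply Byott's reformulation (the algorithmic procedure outlined after the main theorems): since any Hopf Galois structure on a degree $6$ extension must come from a group $N$ of order $6$, and there are exactly two such groups, $C_6$ and $S_3$, it suffices to show that neither $\Hol(C_6)\simeq D_{2\cdot 6}$ nor $\Hol(S_3)\simeq S_3\times S_3$ contains a transitive subgroup isomorphic to $F_{36}$ acting on the six cosets $G/G'$. The first group is immediately excluded on order grounds: $|\Hol(C_6)|=12 < 36$, so it cannot contain a copy of $F_{36}$. Hence the entire content of the proposition reduces to ruling out $N=S_3$, where $\Hol(S_3)\simeq S_3\times S_3$ has order $36$, exactly the order of $F_{36}$.

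First I would observe that because $|\Hol(S_3)|=36=|F_{36}|$, a subgroup isomorphic to $F_{36}$ would have to be all of $\Hol(S_3)$; so the question becomes whether $S_3\times S_3$ is itself isomorphic to $F_{36}$ (the transitive group $6T10$). The cleanest route is to exhibit an abstract invariant that separates the two groups. The direct product $S_3\times S_3$ has a large abelian quotient and, more tellingly, a center of order $1$ but several normal subgroups of order $6$ and a normal Sylow $3$-subgroup $C_3\times C_3$ sitting inside it; I would compute its derived subgroup, its number of elements of each order, or its normal-subgroup lattice, and compare with the corresponding data for $F_{36}$. In particular $F_{36}$, as a Frobenius-type group of order $36$, has a structure (Frobenius kernel of order $9$ with complement of order $4$, or the specific extension determining $6T10$) whose element-order distribution or abelianization differs from that of $S_3\times S_3$. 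Exhibiting any one such discrepancy completes the argument.

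The main obstacle will be pinning down precisely which group of order $36$ the name $F_{36}=6T10$ denotes and then producing the sharp invariant that distinguishes it from $S_3\times S_3$; several nonisomorphic groups of order $36$ exist, so one must be careful to use the correct transitive-group identification and not merely an order count. The safest presentation is to name a concrete invariant, for instance the order of the abelianization $G/[G,G]$ (which is $C_2\times C_2$ for $S_3\times S_3$) or the isomorphism type of a Sylow subgroup, and verify by inspection that $F_{36}$ has a different value; one may also simply appeal to the Magma transitive-group database, noting that $6T10$ and $6T13$'s subgroup $S_3\times S_3$ are listed as distinct. Once $S_3\times S_3\not\simeq F_{36}$ is established, no transitive subgroup of either holomorph is isomorphic to $F_{36}$, so by Byott's criterion $K/k$ admits no Hopf Galois structure.
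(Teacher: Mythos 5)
Your proposal is correct and follows essentially the same route as the paper: rule out $N=C_6$ because $|\Hol(C_6)|=12<36$, then note that a copy of $F_{36}$ inside $\Hol(S_3)\simeq S_3\times S_3$ would have to be the whole group, which is impossible since $F_{36}\not\simeq S_3\times S_3$. You actually supply more detail than the paper, which merely asserts this non-isomorphism; any of your proposed invariants settles it, e.g.\ $F_{36}\simeq(C_3\times C_3)\rtimes C_4$ has abelianization $C_4$ and elements of order $4$, whereas $S_3\times S_3$ has abelianization $C_2\times C_2$ and exponent $6$.
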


\begin{proof}The group $G=\Gal(\wk/k)=F_{36}$ has order 36 but it is not isomorphic to $\Hol(S_3)$.
Since $\Hol(C_6)$ is too small, there is no regular subgroup $N$
of $S_6$ such that $\Hol(N)$ contains $G$.
\end{proof}

\subsection{Intermediate extensions in the case $G=F_{18}:2$.}

We consider now fields $F$ with $K\subset F \subset \wk$ and
analyze the Hopf Galois property for the extension $F/k$ in the
unique case in which $K/k$ is Hopf Galois of degree 6 and there
are proper intermediate fields, namely when $G=F_{18}:2$.
Therefore, $G'\simeq S_3$ and we can have $\Gal(\wk/F)\simeq C_2$
or $\Gal(\wk/F)\simeq C_3$.

\begin{prop} Let $K/k$ be a separable extension of degree 6 and $\wk/k$ its
Galois closure. Let us assume $\Gal(\wk/k)=F_{18}:2$ and let $F$
be a field with $K\subseteq F\subseteq \wk$. If $[F:k]=18$, then
$F/k$ is almost classically Galois. If $[F:k]=12$, then $F/k$ is
Hopf Galois  but not almost classically Galois.
\end{prop}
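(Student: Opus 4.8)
The plan is to pass to subgroups via the Galois correspondence and then invoke the criteria recalled in Section 1. Fields $F$ with $K\subseteq F\subseteq\wk$ correspond to subgroups $H=\Gal(\wk/F)$ of $G'\simeq S_3$; the nontrivial proper cases are $|H|=2$, giving $[F:k]=[G:H]=18$, and $|H|=3$, giving $[F:k]=12$. I would work in the explicit model $G=\Hol(S_3)\simeq S_3\times S_3$ of the previous proposition, in which $G'$ is a point stabilizer---necessarily a non-normal, diagonally embedded $S_3$, since the stabilizer of a faithful transitive action cannot be normal---and $N_1,N_2$ are its two normal complements of order $6$. Before applying the normal-complement characterization of almost classically Galois extensions, I would check that $\wk$ is the Galois closure of $F/k$, i.e. that $H$ has trivial core in $G$: for $|H|=2$ this holds because $S_3\times S_3$ is centerless and so has no normal subgroup of order $2$; for $|H|=3$ it holds because $H$ is a diagonal $C_3\subset A_3\times A_3$, inverted in one coordinate under conjugation by a transposition of the other factor, hence not normal and (being of prime order) core-free.

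For $[F:k]=18$ I would exhibit a normal complement of $H$ directly. The group $S_3\times S_3$ has exactly three subgroups of index $2$, namely $A_3\times S_3$, $S_3\times A_3$ and the sign-diagonal $\{(a,b):\operatorname{sgn}a=\operatorname{sgn}b\}$, whose common intersection is the odd-order group $A_3\times A_3$. Hence the involution generating $H$ lies outside at least one such index-$2$ subgroup $M$; this $M$ is normal, meets $H$ trivially and satisfies $MH=G$, so it is a normal complement and $F/k$ is almost classically Galois.

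For $[F:k]=12$ I would first rule out the almost classically Galois property: a normal complement of $H$ would be a normal subgroup of $G$ of order $12$, but a short projection argument shows none exists---if $M\trianglelefteq S_3\times S_3$ then $|M|=|\pi_1(M)|\cdot|M\cap(1\times S_3)|$ with both factors lying in $\{1,3,6\}$ (the orders of normal subgroups of $S_3$), and no such product equals $12$. To prove that $F/k$ is nevertheless Hopf Galois I would use the Greither--Pareigis criterion in the form: $F/k$ is Hopf Galois iff some regular subgroup $N\subset S_{12}$ satisfies $\lambda(G)\subseteq\Hol(N)$. The decisive reduction is that any transitive subgroup of $S_{12}$ isomorphic to $S_3\times S_3$ realizes a faithful degree-$12$ action, so its point stabilizer is a core-free subgroup of order $3$; the only such subgroups of $S_3\times S_3$ are the diagonal $C_3$'s, and these form a single conjugacy class. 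Thus all such transitive subgroups are conjugate in $S_{12}$ to $\lambda(G)$, and it suffices to exhibit a single group $N$ of order $12$ whose holomorph $\Hol(N)\subset S_{12}$ contains a transitive copy of $S_3\times S_3$.

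The main obstacle is this final, computational step. I would run through the five isomorphism types of order $12$, keep those with $36\mid|\Hol(N)|$, and display explicit generators of a chosen $N$ together with permutations generating a transitive subgroup $\simeq S_3\times S_3$ of $\Hol(N)$, certifying both that it normalizes $N$ and that it acts transitively with an order-$3$ (hence diagonal) stabilizer. The group-theoretic reductions above are precisely what make one such explicit witness sufficient to conclude that $F/k$ is Hopf Galois but not almost classically Galois.
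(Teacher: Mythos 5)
Your reductions are all correct, and they follow essentially the paper's route: you work in the explicit model $G=S_3\times S_3$, produce a normal complement of the order-$2$ subgroup (the paper exhibits $S_3\times A_3$ directly; your parity argument on the three index-$2$ subgroups reaches the same complement a bit more slickly), rule out normal subgroups of order $12$ by the projection argument, and actually \emph{prove} --- rather than cite from the transitive-groups database, as the paper does via the class $12T16$ --- that all transitive copies of $S_3\times S_3$ in $S_{12}$ are conjugate, so that one holomorph witness suffices. Your check that $\Gal(\wk/F)$ is core-free in $G$, so that $\wk$ really is the Galois closure of $F/k$, is a point the paper passes over silently and is needed before the normal-complement criterion can be invoked.

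Nevertheless there is a genuine gap, located exactly at the step you yourself call ``the main obstacle'': you never produce an order-$12$ group $N$ together with a transitive subgroup of $\Hol(N)$ isomorphic to $S_3\times S_3$, and without that witness the assertion that the degree-$12$ extension is Hopf Galois is simply not proved. This step is not a routine verification that the surrounding reductions render automatic: the other group of order $36$ in the paper's table, $F_{36}$, admits the very same reductions (unique conjugacy class $12T17$ of transitive embeddings in $S_{12}$, no normal subgroup of order $12$), and there the identical search through the five holomorphs \emph{fails}, which is precisely why the paper concludes that the degree-$12$ intermediate extensions over an $F_{36}$-closure are \emph{not} Hopf Galois. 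So the positive answer for $S_3\times S_3$ lives entirely in the computation you postponed. The paper supplies it: for $N=D_{2\cdot 6}$ (equivalently $\operatorname{Dic}_3$, whose holomorph is isomorphic, the transitive group $12T81$), with the four listed generators $g_1,g_2,g_3,g_4$ of $\Hol(N)\subset S_{12}$, the subgroup $\langle g_1^2,\ g_4g_3,\ g_1g_3g_1^{-1}g_3^{-1},\ g_2g_3^2\rangle$ is transitive and isomorphic to $S_3\times S_3$; exhibiting this (or an equivalent witness) is what your proposal still needs to be a complete proof.
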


\begin{proof} To analyze the case $\Gal(\wk/F)\simeq C_2$, we give first a  more explicit description of
$G=F_{18}:2$, which is a group isomorphic to $S_3 \times S_3$. Let
$$
\begin{array}{rcl}
H&=&\langle r,s| r^3=s^2=1, rs=sr^2\rangle \\
K&=&\langle x,y| x^3=y^2=1, xy=yx^2\rangle
\end{array}
$$
be direct factors of $G$. Then, modulo conjugation, $G'=\langle
rs,xy\rangle$, since $G'$ has order 6, is a not a normal subgroup
of $G$ and does not contain the normal subgroups $\langle
r\rangle$ and $\langle x\rangle$. Then, $\Gal(\wk/F)\simeq C_2$ is
$\langle sy\rangle$, $\langle rxsy\rangle$ or $\langle
r^2x^2sy\rangle$. All of them have normal complement $N=H\times
\langle x\rangle$ (and also $N=K\times \langle r\rangle$). This
shows that $F/k$ is almost classically Galois.

Next we consider the case $\Gal(\wk/F)\simeq C_3$. Now the
extension $F/k$ is not almost classically Galois since $G$ has no
normal subgroups of order 12. We check if there is a regular
subgroup $N \subset S_{12}$ such that $G\subseteq \Hol(N)$. Now we
are considering $G$ as a transitive subgroup of degree 12,  and
transitive subgroups of $S_{12}$ isomorphic to $S_3\times S_3$
happen to be in a unique conjugacy class, which is the one denoted
by 12T16 according to the notation in \cite{CHM}. Therefore, we
are looking for a group $N$ of order 12 such that its holomorph
has a transitive subgroup isomorphic to $S_3\times S_3$.

The  cyclic group $C_{12}$ is ruled out because its holomorph has
order $12\varphi(12)$ \linebreak $=48$. The holomorph of the
alternating group$A_4$ has order  $12 \cdot 24=288$ and the
remaining three groups of order 12 have holomorphs of order 144.
Even more, the dihedral group $D_{2\cdot 6}$ and the dicyclic
group $\operatorname{Dic}_3$ have isomorphic holomorphs: the
transitive group in the class 12T81, again in the notation of
\cite{CHM}. Therefore, $\Hol(D_{2\cdot 6})$ has generators
$$
\begin{array}{ll}
g_1=(2,6,10)(4,8,12),
&g_2=(1,5)(2,10)(4,8)(7,11),\\
g_3=(1,4,7,10)(2,5,8,11)(3,6,9,12),&
g_4=(1,7)(3,9)(5,11).
\end{array}
$$
The subgroup $\langle g_1^2, \ g_4g_3, \ g_1g_3g_1^{-1}g_3^{-1},\
g_2g_3^2\rangle$ is transitive and isomorphic to $S_3\times S_3$
(hence to $F_{18}:2$).
\end{proof}



Let us remark that we have seen that the extension $F/k$ with
$[F:k]=18$ has at least two different Hopf Galois structures. The
corresponding Hopf algebras are obtained by Galois descent of
$\wk[D_{2\cdot 6}]$ and $\wk[\operatorname{Dic}_3]$.

\subsection{Intermediate extensions for $K/k$ not Hopf Galois}

We consider now the degree 6 extensions $K/k$ which are not Hopf
Galois and determine in each case the minimal degree $[F:k]$ for a
field $F$ with $K\subseteq F\subseteq \wk$ such that the extension
$F/k$ is Hopf Galois.

\subsubsection{Case $|\Gal(\wk/k)|=24$}

We have 3 different conjugacy classes for $G$ inside $S_6$:
$2A_4$, $S_4(6d)$ and $S_4(6c)$. We consider the subgroup $G'=\Gal(\wk/K)$, which has order 4, and we ask whether there is a subgroup $C\subset G'$ of order 2 such that the fixed field $F=\wk^C$ provides a Hopf Galois extension $F/k$. Since the degree of this extension is 12, we deal again with the five isomorphism classes of subgroups of order 12 and its holomorphs. Since they have order divisible by 24, none of them
is excluded a priori.

The three groups $G$ under consideration have subgroups of order 12, therefore our first check will be for
 almost classically Galois extensions $F/k$.

The group $2A_4$ is isomorphic to $A_4\times C_2$ and an stabilizer $G'$
is isomorphic to $\langle \bigl( (12)(34), s\bigr)\rangle\times \langle \bigl( (13)(24), s\bigr)\rangle$, where
$s$ is a generator of $C_2$. It has three subgroups of order 2, two of them with normal complement $N\simeq  A_4\times 1\simeq A_4$.

\begin{prop}
Let $K/k$ be a separable extension of degree $6$ and let $\wk/k$ be its Galois closure.
If $\Gal(\wk/k)=2A_4$, there exist two fields $F$, with $K\subseteq F\subseteq \wk$,
such that $[F:k]=12$ and $F/k$ is almost classically Galois.
\end{prop}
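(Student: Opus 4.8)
The plan is to recast the statement group-theoretically through the Galois correspondence and the normal-complement characterization of almost classically Galois extensions, and then to run an elementary count of the order-$12$ normal subgroups of $G=\Gal(\wk/k)\simeq A_4\times C_2$. Write $V=\langle a,b\rangle$ for the Klein four normal subgroup of $A_4$, with $a=(12)(34)$, $b=(13)(24)$ and $ab=(14)(23)$, and let $s$ generate the factor $C_2$. As recalled just above, the stabilizer $G'=\Gal(\wk/K)$ is, up to conjugation, $\langle(a,s),(b,s)\rangle=\{(1,1),(a,s),(b,s),(ab,1)\}$, a Klein four-group whose three subgroups of order $2$ are $C_a=\langle(a,s)\rangle$, $C_b=\langle(b,s)\rangle$ and $C_{ab}=\langle(ab,1)\rangle$.

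Under the Galois correspondence, fields $F$ with $K\subseteq F\subseteq\wk$ and $[F:k]=12$ are exactly the fixed fields $F=\wk^{C}$ of the order-$2$ subgroups $C=\Gal(\wk/F)$ of $G'$, since $[\wk:F]=2$ and $K\subseteq F$ forces $C\subseteq G'$. By the characterization of almost classically Galois extensions, such an $F/k$ is almost classically Galois if and only if $C$ admits a normal complement $N$ in $G$; necessarily $|N|=12$.

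The crux is therefore to list the normal subgroups of $G$ of order $12$. Any such subgroup has index $2$, hence is the kernel of a surjection $G\to C_2$, and conversely. Computing the abelianization gives $G^{\mathrm{ab}}\simeq (A_4)^{\mathrm{ab}}\times C_2\simeq C_3\times C_2\simeq C_6$, so there is a single index-$2$ subgroup, namely $N_0=A_4\times\{1\}$. I expect this abelianization computation to be the only genuine content of the proof; the rest is verification.

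It remains to test $N_0$ against the three order-$2$ subgroups. Because $(a,s)$ and $(b,s)$ have nontrivial second coordinate, neither lies in $N_0$, so $N_0\cap C_a=N_0\cap C_b=\{(1,1)\}$; the intersections being trivial, $|N_0C_a|=|N_0|\,|C_a|=24=|G|$ and similarly for $C_b$, whence $N_0\simeq A_4$ is a normal complement for both $C_a$ and $C_b$. On the other hand $(ab,1)\in N_0$, so $C_{ab}\subseteq N_0$, and since $N_0$ is the unique normal subgroup of order $12$ the subgroup $C_{ab}$ has no normal complement. Therefore $F_a=\wk^{C_a}$ and $F_b=\wk^{C_b}$ are two distinct fields (as $C_a\neq C_b$) with $K\subseteq F_i\subseteq\wk$, $[F_i:k]=12$, and both almost classically Galois, which is what we had to prove.
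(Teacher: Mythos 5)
Your proof is correct and follows essentially the same route as the paper: identify $G\simeq A_4\times C_2$, realize the stabilizer $G'$ (up to conjugation) as the Klein group $\{(1,1),(a,s),(b,s),(ab,1)\}$, and observe that $A_4\times\{1\}$ is a normal complement in $G$ to the two order-$2$ subgroups $\langle(a,s)\rangle$ and $\langle(b,s)\rangle$, whose fixed fields give the two desired degree-$12$ extensions. Your abelianization computation showing $A_4\times\{1\}$ is the \emph{unique} index-$2$ subgroup is a small addition the paper omits (it is only needed to conclude that the third subgroup $\langle(ab,1)\rangle$ has no normal complement, which the statement does not require), but otherwise the two arguments coincide.
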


When a degree 6 extension $K/k$ has Galois group isomorphic to
$S_4$, the subgroup $G'=\Gal(\wk/K)$ has order 4 and may be
isomorphic to  $C_2\times C_2$ or $C_4$. By considering the action
of $G$ on the cosets $G/G'$ we get $S_4(6d)$ in the first case and
$S_4(6c)$ in the second one.  In the first case,  $G'\subset S_4$
is generated by two disjoint transpositions and, therefore, it has
two subgroups having normal complement isomorphic to $A_4$. In the
second case, $G'\subset S_4$ is generated by a $4$-cycle $c$ and
its unique subgroup $C$ of order 2 is generated by $c^2$, a
product of two disjoint transpositions. The subgroup $C$ is then
contained in the alternating group $A_4$ and, therefore, has no
normal complement. We still study whether the intermediate field
$F=\wk^C$ can be a Hopf Galois extension. We deal once again with
the groups of order 12 and its holomorphs.

The symmetric group $S_{12}$ has two conjugacy classes of transitive subgroups isomorphic to $S_4$. In one of them, the stabilizer of a point is a group of order 2 with conjugacy class of length 6 and in the other one is a group of order 2 with conjugacy class of length 3. In our situation $C=\langle c^2\rangle$ has 3 conjugates, corresponding to the three different products of disjoint transpositions. Therefore the action of  $G$ on cosets $G/C$ gives the transitive group of degree 12 named $12T9=S_4(12e)$ in \cite{CHM}. In this group the elements of order 4 have disjoint cycle
decomposition type $(2)(2)(4)(4)$, but none of the  holomorphs of the groups of order 12 has elements with this decomposition type.

\begin{prop}
Let $K/k$ be a separable extension of degree $6$ and let $\wk/k$ be its Galois closure.
We assume that $G=\Gal(\wk/k) \simeq S_4$.

If $\Gal(\wk/K)$ is isomorphic to the Klein group (namely
$G=S_4(6d)$), there are two subgroups of order 2 having normal
complement in $G$. Therefore, there are two almost classically
Galois extensions $F/k$ of degree 12, with $K\subset F\subset
\wk$.

If $\Gal(\wk/K)$ is cyclic (namely $G=S_4(6c)$), there is a unique  field $F$
with $K\subsetneq F\subsetneq\wk$ and $F/k$ is not Hopf Galois.
\end{prop}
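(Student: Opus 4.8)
The plan is to treat the two isomorphism types of $G'=\Gal(\wk/K)$ separately. In both cases the intermediate fields $F$ with $K\subset F\subset\wk$ correspond, via the Galois correspondence for $\wk/k$, to the order-$2$ subgroups $C\subsetneq G'$, and each such $F=\wk^C$ has degree $[F:k]=[G:C]=12$. I would establish the almost classically Galois part through the normal-complement criterion (Theorem 1.3) and the last lemma of Section~1 on order-$2$ subgroups of symmetric groups, and the non-Hopf-Galois part through the Greither--Pareigis criterion (Theorem 1.2) combined with a cycle-type obstruction.

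For $G=S_4(6d)$, where $G'\simeq C_2\times C_2$ sits inside $S_4$ as a subgroup generated by two disjoint transpositions, say $G'=\langle(12),(34)\rangle$, I would first list its three subgroups of order $2$: $\langle(12)\rangle$, $\langle(34)\rangle$ and $\langle(12)(34)\rangle$. Since $(12)$ and $(34)$ are odd permutations, each of the first two has normal complement $A_4$, whereas $(12)(34)$ lies in $A_4$ and hence has no normal complement. Thus exactly two of the three order-$2$ subgroups $C$ yield, via Theorem 1.3, almost classically Galois extensions $F=\wk^C$, each of degree $12$ and satisfying $K\subset F\subset\wk$. This settles the first assertion.

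For $G=S_4(6c)$, where $G'\simeq C_4$ is generated by a $4$-cycle $c$, the group $G'$ has a unique subgroup of order $2$, namely $C=\langle c^2\rangle$; by the Galois correspondence this forces a unique proper intermediate field $F=\wk^C$ with $K\subsetneq F\subsetneq\wk$. Since $c^2$ is a product of two disjoint transpositions it lies in $A_4$, so $C$ has no normal complement and $F/k$ is not almost classically Galois. To prove that $F/k$ is not Hopf Galois at all, I would invoke the Greither--Pareigis criterion: $F/k$ is Hopf Galois precisely when some regular subgroup $N$ of $S_{12}$ is normalized by $\lambda(G)$, i.e. when $\lambda(G)\subseteq\Hol(N)$ for some group $N$ of order $12$, where $\lambda(G)$ is the image of $G$ acting on the $12$ cosets $G/C$. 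That action is the transitive group $12T9=S_4(12e)$, in which every element of order $4$ has cycle type $(2)(2)(4)(4)$.

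The crux, and the step I expect to be the main obstacle, is then to verify that no element of order $4$ in the natural degree-$12$ action of $\Hol(N)$ has cycle type $(2)(2)(4)(4)$, as $N$ ranges over the five isomorphism classes $C_{12}$, $C_6\times C_2$, $A_4$, $D_{2\cdot 6}$ and $\operatorname{Dic}_3$ of groups of order $12$. This is a finite check: writing elements of $\Hol(N)=N\rtimes\Aut(N)$ as affine maps $x\mapsto n\,\phi(x)$ on the $12$ points of $N$ and computing their cycle structure, one finds that order-$4$ elements never decompose as two transpositions together with two $4$-cycles. Granting this, $\lambda(G)\simeq S_4$ cannot embed in any such $\Hol(N)$ compatibly with the required action, so no admissible regular $N$ exists and $F/k$ is not Hopf Galois. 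Along the way I would record that $S_{12}$ has exactly two conjugacy classes of transitive subgroups isomorphic to $S_4$, distinguished by the point-stabilizer having a conjugacy class of length $6$ or of length $3$, and that the three conjugates of $C=\langle c^2\rangle$ (the three products of disjoint transpositions) place our $\lambda(G)$ in the length-$3$ class $12T9$, which is what justifies the cycle-type computation above.
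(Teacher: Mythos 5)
Your proposal is correct and follows essentially the same route as the paper: the Klein case via the parity/normal-complement lemma giving the two subgroups with complement $A_4$, and the cyclic case via identifying the action of $G$ on $G/\langle c^2\rangle$ with $12T9=S_4(12e)$ (using the length-3 conjugacy class of $\langle c^2\rangle$) and then ruling out all five holomorphs of order-12 groups by the absence of elements of cycle type $(2)(2)(4)(4)$. The finite check you defer is exactly the (computer-assisted) verification the paper relies on as well.
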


\subsubsection{Case $|\Gal(\wk/k)|=36$.}

When $G=F_{36}$, an stabilizer $G'$ is isomorphic to $S_3$ and
included in the normal subgroup of $G$ of order 18. On the other
hand, $G$ has no normal subgroups of order 12. Therefore, none of
the intermediate extensions will be almost classically Galois.

Since
the symmetric group  $S_{12}$ has a unique conjugacy class of transitive subgroups isomorphic to $G$, the class $12T17$,
it is enough to consider the holomorphs of the groups of order 12 and look for subgroups isomorphic to $G$.
The cyclic group is excluded for cardinality reasons and the remaining holomorphs are $12T81$, $12T83$ and $12T127$.
We check that none of them has a transitive subgroup isomorphic to $G$. Therefore, the intermediate extensions of degree 12 are not
Hopf Galois extensions.

The symmetric group $S_{18}$ has also a unique conjugacy class of transitive subgroups isomorphic to $G$,
the class $18T10$. Using Magma, if we take $N=\mathtt{SmallGroup}(18,4)$,
$$
N=\langle u,v,w \mid u^2=v^3=w^3=1,\ uv=v^2u,\ uw=w^2u\rangle,
$$
then the subgroup of $\Hol(N)$ generated by
$$
\begin{array}{l}
\sigma_1=(1, 10)(2, 16, 3, 13)(4, 11, 7, 12)(5, 17, 9, 15)(6, 14, 8, 18)\\
\sigma_2=(2, 3)(4, 7)(5, 9)(6, 8)(11, 12)(13, 16)(14, 18)(15, 17)\\
\sigma_3=(1, 5, 9)(2, 6, 7)(3, 4, 8)(10, 18, 14)(11, 16, 15)(12, 17, 13)\\
\sigma_4=(1, 8, 6)(2, 9, 4)(3, 7, 5)(10, 15, 17)(11, 13, 18)(12, 14, 16)
\end{array}
$$
is isomorphic to $G$. We get a different Hopf Galois structure if  we take $N=\mathtt{SmallGroup}(18,5)$.

\begin{prop}
Let $K/k$ be a separable extension of degree $6$ and let $\wk/k$ be its Galois closure.
Assume that $\Gal(\wk/k)=F_{36}$ and  $K\subset F\subset \wk$.
If $[F:k]=12$, then $F/k$ is not Hopf Galois.
If $[F:k]=18$, then $F/k$ is Hopf Galois but not almost classically Galois.
\end{prop}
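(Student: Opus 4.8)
The analysis splits according to the group $\Gal(\wk/F)$, which — since $K\subseteq F\subseteq\wk$ — is a subgroup of $G'=\Gal(\wk/K)\simeq S_3$. The only nontrivial proper subgroups of $S_3$ have order $2$ or $3$ (the three of order $2$ being $G'$-conjugate and that of order $3$ unique, so the degree determines $\Gal(\wk/F)$ up to conjugacy in $G$), and they give $[F:k]=[\wk:k]/|\Gal(\wk/F)|$ equal to $18$ or $12$ respectively. Throughout I would use two structural facts about $G=F_{36}$, both reflecting $G^{\mathrm{ab}}\simeq C_4$: the group $G$ has no normal subgroup of order $12$ (there is no surjection $G\to C_3$), and it has a \emph{unique} normal subgroup $M$ of order $18$ (the unique index-$2$ subgroup), which moreover contains the stabilizer $G'$.

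For the almost classically Galois question I would apply the normal-complement criterion of Greither and Pareigis. When $[F:k]=12$ the subgroup $\Gal(\wk/F)\simeq C_3$ would need a normal complement of order $12$, which does not exist by the first structural fact. When $[F:k]=18$ the subgroup $\Gal(\wk/F)\simeq C_2$ lies inside $G'\subseteq M$; a normal complement would be a normal subgroup of order $18$ meeting it trivially, but $M$ is the only subgroup of order $18$ and it already contains $\Gal(\wk/F)$. Hence neither extension is almost classically Galois.

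To settle the Hopf Galois property I would run Byott's reformulation: $F/k$ is Hopf Galois iff some group $N$ of order $[F:k]$ admits an embedding of $G$ into $\Hol(N)\subseteq S_{[F:k]}$ with transitive image. A key preliminary reduction — which I would verify by machine — is that $S_{12}$ (resp. $S_{18}$) carries a single conjugacy class of transitive subgroups isomorphic to $F_{36}$, namely $12T17$ (resp. $18T10$); this is exactly what lets us test one representative rather than track the embedding $G\hookrightarrow S_{[F:k]}$. For $[F:k]=12$ I would run through the five groups of order $12$: the cyclic group is discarded at once, since $|\Hol(C_{12})|=12\,\varphi(12)=48$ is not divisible by $36$, while the remaining holomorphs are the transitive groups $12T81$ (shared by $D_{2\cdot 6}$ and $\operatorname{Dic}_3$), $12T83$ (for $C_2\times C_6$) and $12T127$ (for $A_4$). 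A direct computation shows that none of these three contains a transitive subgroup isomorphic to $F_{36}$, so no admissible $N$ exists and $F/k$ is not Hopf Galois.

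For $[F:k]=18$ it suffices to exhibit one witness. I would take $N=\mathtt{SmallGroup}(18,4)=\langle u,v,w\mid u^2=v^3=w^3=1,\ uv=v^2u,\ uw=w^2u\rangle$, compute $\Hol(N)\subseteq S_{18}$, and display four permutations $\sigma_1,\dots,\sigma_4\in\Hol(N)$ generating a transitive subgroup isomorphic to $G=F_{36}$; checking the isomorphism type, transitivity, and membership in $\Hol(N)$ is then a finite verification. Together with the second paragraph this shows $F/k$ is Hopf Galois but not almost classically Galois. The main obstacle is genuinely the exhaustive step in degree $12$: a negative Hopf Galois result requires eliminating \emph{every} group $N$ of order $12$, and this rests on the (computationally established) uniqueness of the conjugacy class $12T17$ and on the case-by-case inspection of the three surviving holomorphs.
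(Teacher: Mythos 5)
Your proposal is correct and follows essentially the same route as the paper: rule out almost classically Galois via the normal subgroup structure of $F_{36}$, then use Byott's reformulation together with the uniqueness of the conjugacy classes $12T17$ and $18T10$, the case-by-case elimination of the holomorphs $12T81$, $12T83$, $12T127$ in degree $12$, and the witness $N=\mathtt{SmallGroup}(18,4)$ in degree $18$. The only difference is cosmetic: you justify the facts that $G$ has no normal subgroup of order $12$ and a unique one of order $18$ containing $G'$ by appealing to $G^{\mathrm{ab}}\simeq C_4$, which the paper simply asserts.
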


\subsubsection{Case $|\Gal(\wk/k)|=48$.}

Now $G=2S_4$. An stabilizer $G'$  is a dihedral group of order 8
and  may have  intermediate extensions $F$ with $[F:k]=12$ or
$[F:k]=24$.

As for the normal subgroups of $G$, there is one of order 12 and
three of order 24,  the first one being contained in the other
three. Let us denote by $N$ this normal subgroup of $G$ of order
12. We see that $N\cap G'$ is a group of order 2, contained in all
the subgroups of $G'$ of order 4. Therefore, none of the
intermediate extensions $F$ such that $[F:k]=12$ is almost
classically Galois. On the other hand, when we intersect the three
different normal subgroups of order 24 with $G'$, we get the three
different subgroups of $G'$ of order 4, one cyclic and two Klein
groups. This gives that any order 2 subgroup of $G'$ different
from $N\cap G'$ has a normal complement in $G$ (in fact, two
different normal complements). Namely, there exist intermediate
fields $K\subset F\subset \wk$ such that $[F:k]=24$ and  $F/k$ is
almost classically Galois (with two different Hopf Galois
structures). It only remains to check whether there are Hopf
Galois structures, not almost classically Galois, for the degree
12 extensions.


When we consider a cyclic subgroup of order 4 of $G'$, the representation of $G$ as a transitive group of $S_{12}$ is $12T24$.
We proceed as before with the holomorphs of the groups of order 12. The only case where we find transitive
subgroups isomorphic to $G$ is $\Hol(A_4)$. But the conjugacy class of these subgroups is $12T22$.

When we consider a subgroup $V$ of $G'$ isomorphic to the Klein group, a priori we can obtain the transitive groups $12T21$, $12T22$ or $12T23$, since they are isomorphic to $G$ and in these groups the stabilizer of an element is a Klein group. According to the previous comment, we can only have a Hopf Galois structure if the action of $G$ on $G/V$ identifies  $G$ with a subgroup of $S_{12}$ in  the conjugacy class $12T22$. Considering this action, $V$ is the stabilizer of the coset $\Id V$.  The stabilizer of a point in $12T22$ is a Klein group formed by the identity, a permutation having exactly four fixed points and two permutations having exactly two fixed points.
Let us check the fixed points of the elements of $V$ acting on $G/V$.

The subgroup $V\subset G'\subset S_6$ can be $ \{\Id, (ab),\
(cd),\ (ab)(cd)\}$ or \linebreak $ \{\Id, (ab)(cd),\ (ac)(bd),\
(ad)(cb)\} $ with $(ab)$ and $(cd)$ disjoint transpositions. We
have that the unique transposition $(ef)$ which is disjoint with
both $(ab)$ and $(cd)$ belongs to $G$, and also $(ae)(bf)\in G$,
$(ce)(df)\in G$. In the first case, if $\tau_1=(ab)$ and
$\tau_2=(cd)$, we consider the cosets $C_1=\Id V$, $C_2=(ef)V$,
$C_3=(ae)(bf)V$ and $C_4=(ce)(df)V$. Then, $\tau_1(C_j)=C_j$ for
all $j\in\{1,2,4\}$ and $\tau_2(C_j)=C_j$  for all
$j\in\{1,2,3\}$. The second case is similar with $\tau_1=(ab)(cd)$
and $\tau_2=(ac)(bd)$. We consider $C_1=\Id V$, $C_2=(ef)V$,
$C_3=(ab)V=(cd)V$ and we see $\tau_i(C_j)=C_j$ for all
$i\in\{1,2\}$ and all $j\in\{1,2,3\}$.
 Therefore, in both cases the elements $\tau_i$ act on $G/V$ having at least 3 fixed points.
Namely, the elements $\tau_i\in V$ give rise to two permutations in $S_{12}$ having at least 3 fixed points.
Hence, the image of $V$ does not correspond to the case $12T22$.

\begin{prop}
Let $K/k$ be a separable extension of degree $6$ and let $\wk/k$ be its Galois closure.
Assume that $\Gal(\wk/k)=2S_4$ and  $K\subset F\subset \wk$.
If $[F:k]=12$, then $F/k$ is not Hopf Galois.

There exists an $F$ such that $[F:k]=24$ and $F/k$ is an almost classically Galois extension. More precisely,
let $C$ be the order 2 normal subgroup of $G'=\Gal(\wk/K)$. Then, any intermediate
extension $F$ with $[F:k]=24$ and $F\ne \wk^{C}$ is an almost classically Galois extension with at least two different
Hopf Galois structures.
\end{prop}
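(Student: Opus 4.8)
The plan is to translate everything into the group theory of $G=2S_4$ via the criteria already recalled in the introduction. Here $|G|=48$ and the stabilizer $G'=\Gal(\wk/K)$ is dihedral of order $8$; write $G'=\langle r,s\mid r^4=s^2=1,\ srs=r^{-1}\rangle$, so its center is $\langle r^2\rangle$, its reflections are $s,rs,r^2s,r^3s$, and its three order-$4$ subgroups are the cyclic one $\langle r\rangle$ and the two Klein groups $\{1,r^2,s,r^2s\}$ and $\{1,r^2,rs,r^3s\}$. Intermediate fields $K\subsetneq F\subsetneq\wk$ correspond to subgroups $H=\Gal(\wk/F)$ with $1\subsetneq H\subsetneq G'$: the order-$4$ subgroups give $[F:k]=12$ and the order-$2$ subgroups give $[F:k]=24$. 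The structural input I would record first (by inspecting $2S_4$, e.g.\ in Magma) is that $G$ has a unique normal subgroup $N$ of order $12$ and exactly three normal subgroups $M_1,M_2,M_3$ of order $24$, with $N\subset M_i$ for each $i$; that $\{M_i\cap G'\}$ is precisely the set of three order-$4$ subgroups of $G'$; and that $C:=N\cap G'=\langle r^2\rangle$ is the center of $G'$, hence is contained in every order-$4$ subgroup.

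For the degree-$24$ statement I would argue via Theorem 1.3. A normal complement of an order-$2$ subgroup $H$ must be a normal subgroup of order $24$, i.e.\ one of the $M_i$, and $M_i$ is a complement exactly when $H\cap M_i=1$, equivalently when the nontrivial element of $H$ (which lies in $G'$) avoids $M_i\cap G'$. If $H=C$, its generator $r^2$ lies in all three order-$4$ subgroups, hence in all three $M_i$, so $C$ has no normal complement. If $H\ne C$, then $H$ is generated by a reflection, and each reflection lies in exactly one order-$4$ subgroup of $G'$ (it belongs to exactly one of the two Klein groups and to neither the cyclic subgroup nor the other Klein group); thus the reflection lies in exactly one $M_i$, and the remaining two $M_j$ are normal complements of $H$. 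By Theorem 1.3 the field $F=\wk^H$ is then almost classically Galois, and the two distinct complements give the two announced Hopf Galois structures.

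For the degree-$12$ statement the non-almost-classical part is immediate from Theorem 1.3: a normal complement of an order-$4$ subgroup $H$ would have to be the unique normal subgroup $N$ of order $12$, but $C=N\cap G'\subseteq H$ forces $H\cap N\supseteq C\ne 1$. To upgrade this to ``not Hopf Galois'' I would apply Byott's reformulation (Theorem 1.5): realize $G$ inside $S_{12}$ through its action on $G/H$ and test whether this copy lies in $\Hol(M)$ for some group $M$ of order $12$. Running through the holomorphs of the five order-$12$ groups, only $\Hol(A_4)$ contains a transitive copy of $2S_4$, and these copies form the single conjugacy class $12T22$. When $H=\langle r\rangle$ is cyclic the action on $G/H$ is the class $12T24\ne 12T22$, so no Hopf Galois structure exists; it remains to treat the Klein case.

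The main obstacle is exactly this Klein case: a priori the action of $G$ on $G/H$ could land in $12T21$, $12T22$ or $12T23$, and a Hopf Galois structure is possible only for $12T22$, so I must rule that class out. The plan is a fixed-point count. In $12T22$ the stabilizer of a point is a Klein group consisting of the identity, one element with four fixed points and two elements with exactly two fixed points. I would instead compute directly the cosets fixed by the two nontrivial generators $\tau_1,\tau_2$ of $H$: writing $H\subset G'\subset S_6$, letting $(ef)$ be the transposition on the two points fixed by $H$, and using that $(ef),(ae)(bf),(ce)(df)\in G$ (a consequence of the structure of $2S_4$), one checks in each of the two possible shapes of $H$ that $\Id H$, $(ef)H$ and one further coset are fixed by both $\tau_1$ and $\tau_2$. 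Hence each $\tau_i$ fixes at least three points and cannot be a two-fixed-point element, so the image is not $12T22$. This eliminates the last possibility and shows $F/k$ is not Hopf Galois, completing the proof; everything else is routine holomorph bookkeeping.
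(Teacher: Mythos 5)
Your proposal is correct and follows essentially the same route as the paper: the same normal-subgroup lattice argument (the unique normal subgroup of order $12$ sitting inside the three normal subgroups of order $24$, whose intersections with $G'$ are the three order-$4$ subgroups) for both almost classically Galois statements, and the same Byott/holomorph analysis for degree $12$ (only $\Hol(A_4)$ admits transitive copies of $G$, all in class $12T22$, which is excluded for the cyclic stabilizer by the label $12T24$ and for the Klein stabilizer by a fixed-point count). One minor wording slip: for $V=\{\Id,(ab),(cd),(ab)(cd)\}$ the third fixed coset differs for the two generators (the paper finds $\tau_1=(ab)$ fixes $\Id V,(ef)V,(ce)(df)V$ while $\tau_2=(cd)$ fixes $\Id V,(ef)V,(ae)(bf)V$, so only two cosets are fixed by \emph{both}), but since each $\tau_i$ individually has at least three fixed points, which is all the comparison with $12T22$ requires, your conclusion is unaffected.
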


\subsubsection{Case $|\Gal(\wk/k)|=60$.}

When $G= A_5$, the group $G'$ is dihedral of order 10 and we can have intermediate extensions $F$ such that
$[F:k]$ is either 12 or 30. The extensions of degree 12 are not Hopf Galois, since none of the holomorphs of the groups of
order 12 has order divisible by 60. The extensions of degree 30 are not Hopf Galois either, since the four  groups of order 30
have solvable holomorph.

\begin{prop}
Let $K/k$ be a separable extension of degree $6$ and let $\wk/k$ be its Galois closure.
Assume that $\Gal(\wk/k)=A_5$ and  $K\subset F\subset \wk$.
Then, $F/k$ is not Hopf Galois.
\end{prop}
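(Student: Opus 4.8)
The plan is to classify the intermediate fields $F$ by the conjugacy class of the subgroup $\Gal(\wk/F)$ inside $G=A_5$, and then for each possible degree $[F:k]$ to rule out the Hopf Galois condition using the holomorph computations already established in the excerpt. First I would identify $G'=\Gal(\wk/K)$, which is the point stabilizer of the degree 6 action of $A_5$, hence a dihedral group of order 10. The intermediate fields $F$ with $K\subset F\subset\wk$ correspond to subgroups $\Gal(\wk/F)$ lying strictly between $1$ and $G'\simeq D_{2\cdot 5}$. Such proper nontrivial subgroups of $D_{2\cdot 5}$ are either the cyclic subgroup of order $5$ or one of the order $2$ subgroups, giving $[F:k]=[G:\Gal(\wk/F)]=60/5=12$ or $60/2=30$ respectively.

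Next I would treat the two possible degrees separately. For $[F:k]=12$, the Hopf Galois condition would require a regular subgroup $N\subset S_{12}$ of order $12$ whose holomorph $\Hol(N)$ contains a transitive copy of $A_5$; but $|A_5|=60$ must then divide $|\Hol(N)|$, and the excerpt has already catalogued the holomorphs of all five groups of order $12$ (orders $48$, $288$, and three of order $144$), none of which is divisible by $60$. For $[F:k]=30$, an analogous requirement is a regular $N\subset S_{30}$ of order $30$ with $A_5\hookrightarrow\Hol(N)$; here I would invoke the fact, established in the degree 5 analysis, that all four groups of order $30$ (namely $C_{30}$, $S_3\times C_5$, $D_{2\cdot 5}\times C_3$ and $D_{2\cdot 15}$) have solvable holomorph, so they cannot contain the nonabelian simple group $A_5$.

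Having disposed of both degrees, I would conclude that no proper intermediate field $F$ yields a Hopf Galois extension $F/k$. The statement as phrased also covers $F=\wk$ (the case $\Gal(\wk/F)=1$), but $\wk/k$ is Galois and hence trivially Hopf Galois; since the intended reading restricts to $F$ with $\Gal(\wk/F)$ a proper nontrivial subgroup of $G'$, I would make that scope explicit, matching the convention used throughout this section that the interesting extensions are the strictly intermediate ones.

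The main obstacle I anticipate is not computational but one of bookkeeping and justification: the argument leans entirely on the two reusable facts that the five groups of order $12$ have holomorphs of order at most $288$ and that the four groups of order $30$ have solvable holomorphs. The first gives an immediate cardinality obstruction, but the second is the more delicate point, since one must be sure that solvability of $\Hol(N)$ genuinely forbids an embedding of $A_5$ — this follows because a subgroup of a solvable group is solvable while $A_5$ is not, but it relies on having the complete list of order $30$ groups and their automorphism/holomorph structure, exactly as tabulated in the degree 5 proposition. Provided those earlier results are cited correctly, the proof reduces to the short subgroup-enumeration and divisibility argument sketched above.
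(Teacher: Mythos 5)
Your proposal is correct and follows essentially the same route as the paper: identify $G'$ as the dihedral group of order $10$, reduce to intermediate degrees $12$ and $30$, rule out degree $12$ because no holomorph of a group of order $12$ (orders $48$, $288$, $144$) has order divisible by $60$, and rule out degree $30$ because all four groups of order $30$ have solvable holomorph, which cannot contain the non-solvable group $A_5$. Your additional remarks---making the strict-inclusion reading of $K\subset F\subset\wk$ explicit and justifying that solvability of $\Hol(N)$ blocks an embedding of $A_5$---only spell out details the paper leaves implicit.
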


\subsubsection{Case $|\Gal(\wk/k)|=72$.}

Now we consider $G=F36:2$
and we have that $G'$ is a dihedral group of order 12 having  subgroups of order 2 generated
by transpositions.  They have normal complement in $G$,  since there is a subgroup of $G$ of
order 36 not containing transpositions.

The extensions with $[F:k]=24$ cannot be almost classically Galois, since $G$ has no normal subgroups of order 24.
They are not Hopf Galois extensions either: if  $C\subset G'$ is the unique subgroup of order 3, the action of $G$ on $G/C$
identifies $G$ with the conjugacy class $24T72$ and none of  the  holomorphs of the fifteen groups of order 24  has a
transitive subgroup isomorphic to $G$.


Now we consider a subgroup $V$ of $G'$ of order 4.  It is a Klein group $\langle (ab)\rangle\times \langle (cd)\rangle\subset S_6$
with $(ab)$ and $(cd)$ disjoint transpositions such that $(ab)(cd)$ belongs to the normal subgroup of $G$ of order 18. Therefore,
the extensions with $[F:k]=18$ are not almost classically Galois.

Through the action on $G/V$ the group $G$ is identified with the
subgroup $18T34$ of $S_{18}$. This is the unique conjugacy class
of transitive groups of degree 18  isomorphic to $G$ and having
non cyclic stabilizers. We look now at the holomorphs of the five
groups of order 18. When we consider the generalized dihedral
group $N=(C_3\times C_3)\rtimes C_2$, we find that the subgroup
$\langle \sigma_1, \sigma_2,\sigma_3,\sigma_4,\sigma_5\rangle$ of
$\Hol(N)$, where
$$
\begin{array}{rcl}
\sigma_1&=&(4, 7)(5, 8)(6, 9)(13, 16)(14, 17)(15, 18)\\
\sigma_2&=&            (1, 10)(2, 16, 3, 13)(4, 11, 7, 12)(5, 17, 9, 15)(6, 14, 8, 18)\\
\sigma_3&=&            (2, 3)(4, 7)(5, 9)(6, 8)(11, 12)(13, 16)(14, 18)(15, 17)\\
\sigma_4&=&            (1, 5, 9)(2, 6, 7)(3, 4, 8)(10, 14, 18)(11, 15, 16)(12, 13, 17)\\
\sigma_5&=&            (1, 8, 6)(2, 9, 4)(3, 7, 5)(10, 17, 15)(11, 18, 13)(12, 16, 14),\\
\end{array}
$$
is isomorphic to $G$.

Finally, when we consider a subgroup of $G'$ of order 6,
we see that the corresponding extension $F/k$ is not Hopf Galois because the holomorphs of the subgroups of order 12  have no
transitive subgroups isomorphic to $G$.

\begin{prop}
Let $K/k$ be a separable extension of degree $6$ and let $\wk/k$ be its Galois closure.
Assume that $\Gal(\wk/k)=F_{36}:2$ and  $K\subset F\subset \wk$.
\begin{itemize}
\item If $[F:k]=12$, then $F/k$ is not Hopf Galois.
\item If $[F:k]=18$, then $F/k$ is Hopf Galois but not almost classically Galois.
\item If $[F:k]=24$, then $F/k$ is  not Hopf Galois.
\item There exist intermediate extensions with $[F:k]=36$ such that $[F:k]$ is almost classically Galois. More
explicitly, this is so if $\Gal(\wk/F)$ is generated by a transposition in $S_6$.
\end{itemize}
\end{prop}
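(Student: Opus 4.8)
The plan is to reduce everything to the subgroup structure of $G=\Gal(\wk/k)\simeq F_{36}:2$, realized as the transitive group $6T13\subset S_6$, and then to apply the Greither--Pareigis normal-complement criterion for the almost classically Galois cases and Byott's reformulation for the Hopf Galois cases. Since $K\subset F\subset\wk$, the intermediate fields correspond bijectively to the proper nontrivial subgroups $H=\Gal(\wk/F)$ of the point stabilizer $G'=\Gal(\wk/K)$, which is dihedral of order $12$, with $[F:k]=[G:H]=72/|H|$. Thus $|H|\in\{2,3,4,6\}$ produces exactly the degrees $[F:k]=36,24,18,12$, and the first step is to enumerate the conjugacy classes of such subgroups inside $D_{2\cdot 6}$, distinguishing in the order-$2$ case those $H$ generated by a transposition of $S_6$ from the others.

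For the degree $36$ case ($|H|=2$) I would exhibit a normal complement directly. The group $G$ contains the index-$2$, hence normal, subgroup $F_{36}$ of order $36$, and this subgroup contains no transposition of $S_6$. Therefore, when $H$ is generated by a transposition one has $H\cap F_{36}=1$ and $HF_{36}=G$, so $F_{36}$ is a normal complement of $H$; by the normal-complement criterion the extension $F/k$ is almost classically Galois, which proves the last item. The two negative cases $[F:k]=12,24$ are then settled by running Byott's three steps. I first fix the transitive representation of $G$ attached to the action on $G/H$: for $|H|=3$ this is the class $24T72\subset S_{24}$, and for $|H|=6$ it is a class in $S_{12}$. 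I then verify that none of the fifteen isomorphism classes of groups of order $24$ (respectively the five of order $12$) has a holomorph containing a transitive subgroup isomorphic to $G$, a cardinality sieve eliminating most candidates before the survivors are checked directly. Since a.c.G.\ implies Hopf Galois, the failure of the Hopf Galois condition also rules out the almost classically Galois property here.

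The substantive case is $[F:k]=18$ ($|H|=4$, a Klein four-group $V$), where I must \emph{exhibit} a Hopf Galois structure rather than exclude one. Acting on $G/V$ identifies $G$ with the class $18T34\subset S_{18}$, the unique transitive class isomorphic to $G$ with non-cyclic stabilizer. Taking the generalized dihedral group $N=(C_3\times C_3)\rtimes C_2$ of order $18$, I would write down generators of a regular copy of $N$, compute $\Hol(N)$ as its normalizer in $S_{18}$, and check that the subgroup $\langle\sigma_1,\sigma_2,\sigma_3,\sigma_4,\sigma_5\rangle$ displayed above lies in $\Hol(N)$ and is transitive and isomorphic to $G$; this yields the Hopf Galois property. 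That $F/k$ is \emph{not} almost classically Galois follows because the nontrivial involution $(ab)(cd)\in V$ lies in the normal subgroup of $G$ of order $18$, so any order-$18$ normal complement of $V$ would meet $V$ nontrivially, which is impossible.

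The main obstacle is precisely this degree-$18$ existence claim: unlike the negative cases, it cannot be disposed of by counting or by a structural normal-complement argument, and it rests on producing the explicit regular subgroup $N$ together with the verification that $\sigma_1,\dots,\sigma_5$ normalize it and generate a transitive copy of $S_3\times S_3$ — a finite but delicate computation, carried out in Magma, that constitutes the genuine content of the proposition. The remaining items are then assembled from the three preceding computations.
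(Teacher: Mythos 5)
Your proposal follows the paper's proof essentially step for step: the same normal-complement argument for degree 36 (the order-36 normal subgroup of $G$ containing no transpositions), the same Byott/holomorph eliminations for degrees 24 and 12 (via the class $24T72$ and the holomorphs of the groups of order 24, resp.\ order 12), and the same positive construction in degree 18 via the class $18T34$ and an explicit transitive copy of $G$ inside $\Hol\bigl((C_3\times C_3)\rtimes C_2\bigr)$, together with the identical non--almost-classically-Galois argument using the fact that $(ab)(cd)\in V$ lies in the normal subgroup of order 18. The only content not reproduced is the finite computational verification (the explicit generators and the Magma checks), which the paper itself delegates to machine computation, so there is no genuine gap.
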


\subsubsection{Case $|\Gal(\wk/k)|=120$.}

We consider now the case $G=S_5$. The group $G'$ is isomorphic to
the Frobenius group $F_5$ and also isomorphic to $\Hol(C_5)$, the
holomorph of the cyclic group of order 5. All its subgroups of
order 2 are subgroups of $A_5$ and, therefore, the intermediate
extensions with $[F:k]=60$ are not almost classically Galois.
Since $A_5$ is the unique normal subgroup of $S_5$, no
intermediate extension can be almost classically Galois.

Let us assume that $F$ is an intermediate extension such that
$[F:k]=12,\ 24$ or $30$. All the groups of order 12 or 30 have a
solvable holomorph. The groups of order 24 have a solvable
holomorph except one, which has non solvable holomorph of order
8064. In any case, $G$ is not a subgroup of any of these
holomorphs and the corresponding extensions $F/k$ are not Hopf
Galois.

In order to check extensions with $[F:k]=60$ we must be aware that transitive groups of degree 60 exceeds the database's limit of Magma system, which is 32. But we can still work with the thirteen isomorphism classes of subgroups of order 60 and its holomorphs. We check all the 13 possibilities and there only remains the obvious candidate,
$\Hol(A_5)\simeq A_5\rtimes \Aut(A_5) \simeq A_5\rtimes S_5$: it is the only one with transitive subgroups isomorphic to $S_5$. But we should still check the compatibility with the actions on cosets. We compute these transitive subgroups of $\Hol(A_5)$ and we find two of them, which are conjugate in $S_{60}$.  We see that the stabilizer of an element has order 2 and is generated by an element with 6 fixed points. On the other hand, if $G''$ is the subgroup of order 2 of $G'$, then $G''=<\sigma>$ with
$C=\mbox{Centralizer}(G,\sigma)$ of order 8. The elements in $C$ give the fixed points of the action of $G''$ on cosets $G/G''$ since
$$
g\in C \iff g\sigma=\sigma g \iff  \sigma \cdot gG''=gG''.\\
$$
Taking into account that $g\in C \Rightarrow g\sigma \in C$ and $gG''=g\sigma G''$, we see
that there are only 4 fixed points. Therefore, the action of $G$ on cosets $G/G''$ does not identify $G$ with any of the subgroups of
$\Hol(A_5)$.

\begin{prop}
Let $K/k$ be a separable extension of degree $6$ and let $\wk/k$ be its Galois closure.
Assume that $\Gal(\wk/k)=S_5$ and  $K\subset F\subset \wk$.
Then, $F/k$ is not Hopf Galois.
\end{prop}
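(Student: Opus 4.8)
The plan is to recast the whole problem in the group-theoretic language of Byott's reformulation and then dispose of the finitely many possible degrees one at a time. Write $G=\Gal(\wk/k)=S_5$ in its transitive degree $6$ representation; then $G'=\Gal(\wk/K)$ is a point stabilizer, so $|G'|=20$ and $G'\simeq F_5\simeq\Hol(C_5)$. An intermediate field $K\subsetneq F\subsetneq\wk$ corresponds to a subgroup $1\subsetneq H=\Gal(\wk/F)\subsetneq G'$, and the proper nontrivial subgroups of $F_5$ are, up to conjugacy, $C_2$, $C_4$, $C_5$ and $D_{2\cdot 5}$, giving the four possible degrees $[F:k]=[G:H]=60,30,24,12$. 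First I would rule out the almost classically Galois property: $A_5$ is the only proper nontrivial normal subgroup of $S_5$, so no $H$ admits a normal complement. Indeed, for $H=C_2$ its generator is an even involution, so $H\subset A_5$ and $A_5$ cannot be a complement, while for the orders $4,5,10$ there is simply no normal subgroup of the required index. It then remains to exclude the Hopf Galois property itself in each degree.

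For the degrees $12$, $24$ and $30$ I would invoke the holomorph computations already carried out in the degree $5$ section: every group of order $12$ or $30$ has a solvable holomorph, and so does every group of order $24$ except $C_2\times C_2\times C_2\times C_3$, whose holomorph has order $8064$. Since $S_5$ is not solvable it cannot embed in any solvable holomorph, and the single exceptional case is killed by cardinality, as $5\nmid 8064$. Hence no group $N$ of order $12$, $24$ or $30$ produces a Hopf Galois structure, and these three degrees are eliminated.

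The degree $60$ case is the real obstacle, precisely because transitive groups of degree $60$ lie beyond Magma's database limit of $32$, so one cannot simply enumerate transitive subgroups of $S_{60}$. Instead I would run over the thirteen isomorphism classes of groups $N$ of order $60$ and inspect $\Hol(N)$ directly. All of them except $N=A_5$ have solvable holomorph and are discarded exactly as above; for $N=A_5$ one has $\Hol(A_5)=A_5\rtimes\Aut(A_5)\simeq A_5\rtimes S_5$, which does contain transitive subgroups isomorphic to $S_5$ (two of them, conjugate in $S_{60}$). Thus a Hopf Galois structure could only come from $N\simeq A_5$, and the crux is to decide whether the action of $G$ on $G/H$ is permutation-isomorphic to the action realised inside $\Hol(A_5)$.

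The decisive step is a fixed-point count, which I expect to be the hard part since it replaces the unavailable database check. A direct computation of the transitive copies of $S_5$ inside $\Hol(A_5)$ shows that their point stabilizer is generated by an involution with exactly $6$ fixed points among the $60$ cosets. In our situation $H=\langle\sigma\rangle$ with $\sigma$ an order $2$ element of $F_5$, hence an even involution of $S_5$, whose centralizer $C=C_G(\sigma)$ has order $8$. Now $gH$ is fixed by $\sigma$ iff $g^{-1}\sigma g\in H=\{1,\sigma\}$, i.e. iff $g\in C$; since $g$ and $g\sigma$ lie in $C$ and determine the same coset, $\sigma$ fixes only $|C|/2=4$ cosets of $G/H$. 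As $4\neq 6$, the action of $G$ on $G/H$ cannot be permutation-isomorphic to any transitive copy of $S_5$ in $\Hol(A_5)$, so the degree $60$ extensions are not Hopf Galois either. Combining the four cases proves the proposition.
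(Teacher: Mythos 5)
Your proof is correct and follows essentially the same route as the paper's: ruling out almost classically Galois via the normal subgroup structure of $S_5$, eliminating degrees $12$, $24$, $30$ by solvability of the holomorphs (with the single order-$8064$ exception), and settling degree $60$ by comparing fixed-point counts of the stabilizer involution ($4$ in the coset action on $G/\langle\sigma\rangle$ versus $6$ in the transitive copies of $S_5$ inside $\Hol(A_5)$). The only additions beyond the paper are minor elaborations, namely identifying the exceptional order-$24$ group as $C_2\times C_2\times C_2\times C_3$ and disposing of it by the clean observation that $5\nmid 8064$.
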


\subsubsection{Case $|\Gal(\wk/k)|=360$.}

We consider now the case $\Gal(\wk/k)\simeq A_6$. It is clear
that no intermediate extension $K\subset F\subset \wk$ can be
almost classically Galois. Let us see that the simplicity of $A_6$
allows also to discard the possibility of being a subgroup of the
corresponding holomorph.

The group $G'$ is isomorphic to $A_5$ and we can have intermediate extensions with $[F:k]=30,\ 36,\ 60,\ 72,\ 90,\ 120$ or $180$.
All the groups of order 30, 36, 90 or 180 have solvable holomorph. For the groups of order 60 there is only one case of non solvable
holomorph: $\Hol(A_5)$. But this holomorph has not simple subgroups of order 360.  When the order is 72 we have something similar: only one case of non solvable holomorph but with simple subgroups of order 168. Finally, with order 120 we have three cases of non solvable holomorph but their
simple subgroups are isomorphic to $A_5$.

\begin{prop}
Let $K/k$ be a separable extension of degree $6$ and let $\wk/k$ be its Galois closure.
Assume that $\Gal(\wk/k)=A6$ and  $K\subset F\subset \wk$.
Then, $F/k$ is not Hopf Galois.
\end{prop}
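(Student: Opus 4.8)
The plan is to reduce, via the Byott reformulation (the fifth theorem of the excerpt), the Hopf Galois question to a purely group-theoretic embedding problem and then exploit the simplicity of $A_6$. First I would record the group data. Since $G=\Gal(\wk/k)\simeq A_6$ has order $360$ and $G'=\Gal(\wk/K)$ has index $6$, the subgroup $G'$ is a point stabilizer, hence (being maximal of minimal index) isomorphic to $A_5$ of order $60$. For an intermediate field with $K\subsetneq F\subsetneq \wk$ the group $\Gal(\wk/F)$ is a proper nontrivial subgroup of $A_5$, so its order lies in $\{2,3,4,5,6,10,12\}$ and correspondingly $m:=[F:k]=360/|\Gal(\wk/F)|\in\{180,120,90,72,60,36,30\}$. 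The almost classically Galois possibility is discarded at once: by the third theorem of the excerpt it would require $\Gal(\wk/F)$ to admit a normal complement in $A_6$, which is impossible because $A_6$ is simple while $\Gal(\wk/F)$ is a proper nontrivial subgroup.

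For the Hopf Galois property itself, Byott's theorem shows that $F/k$ is Hopf Galois only if there is a group $N$ of order $m$ whose holomorph $\Hol(N)$ contains a subgroup isomorphic to $G\simeq A_6$; so it suffices to prove that $A_6$ is not a subgroup of $\Hol(N)$ for any $N$ of order $m\in\{30,36,60,72,90,120,180\}$. Here I would use the elementary fact that a nonabelian simple group $S$ contained in a finite group $H$ must embed into one of the composition factors of $H$: intersecting $S$ with a composition series of $H$ produces a subnormal series of $S$, and simplicity forces $S$ to inject into a single composition quotient. Since the composition factors of $\Hol(N)=N\rtimes\Aut(N)$ are exactly those of $N$ together with those of $\Aut(N)$, and since $|N|=m\le 180<360=|A_6|$, no factor coming from $N$ can receive $A_6$; the whole question reduces to whether some composition factor of $\Aut(N)$ has order at least $360$ and contains $A_6$.

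The remaining step is the case analysis over the groups of each order $m$, which I would carry out with Magma. For $m\in\{30,36,90\}$ every group of that order, and its automorphism group, is solvable, so $\Hol(N)$ is solvable and has no nonabelian composition factor at all. For $m\in\{60,72,120,180\}$ some $N$ do have non-solvable holomorph, but one checks that the only nonabelian simple sections that arise are $A_5$ (order $60$), produced by an $A_5$-section inside $N$ or inside $\Aut(N)$, and $\PSL_2(7)\cong\GL_3(2)$ (order $168$), produced by an elementary abelian factor $C_2\times C_2\times C_2$ contributing $\GL_3(2)$ to $\Aut(N)$; the Sylow subgroups available at these orders are too small to generate a larger linear group. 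As $|A_5|=60$ and $|\PSL_2(7)|=168$ are both strictly smaller than $|A_6|=360$, the simple group $A_6$ embeds into neither, so $A_6\not\subseteq\Hol(N)$ for every admissible $N$, and hence no intermediate extension $F/k$ is Hopf Galois.

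The main obstacle is precisely this last enumeration: one must be certain that, across all groups of orders $60,72,120$ and $180$, the holomorphs never produce a composition factor of order $\ge 360$. The structural reason — the Sylow $2$- and $3$-subgroups occurring at these orders are too small to generate a linear group having an $A_6$ section, while any $A_5$ or $\PSL_2(7)$ section is below the order of $A_6$ — is what makes the order comparison $168<360$ decisive; the computer verification only certifies that nothing larger slips through.
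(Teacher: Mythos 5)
Your proposal is correct and follows essentially the same route as the paper: reduce via Byott to asking whether $A_6$ embeds in $\Hol(N)$ for some $N$ of order $30,36,60,72,90,120$ or $180$, discard most orders because the holomorph is solvable, and discard the rest because the only nonabelian simple sections that occur ($A_5$ and $\PSL_2(7)$, of orders $60$ and $168$) are too small to contain the simple group $A_6$ of order $360$. Your explicit composition-factor lemma is a clean justification of the paper's appeal to ``simple subgroups'', and your bookkeeping even repairs a small slip: the paper asserts that all groups of order $180$ have solvable holomorph, whereas $A_5\times C_3$ does not --- but, exactly as your argument shows, its composition factors are still only copies of $A_5$ and cyclic groups, so the conclusion is unaffected.
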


\subsubsection{Case $|\Gal(\wk/k)|=720$.}

In this last case $G=\Gal(\wk/k)=S_6$ and $G'\simeq S_5$. We have
subgroups of $G'$ of order 2 generated by transpositions, namely
having normal complement $A_6$ in $G$. The corresponding
intermediate extensions are almost classically Galois extensions.

For intermediate extensions with $[F:k]=180,\ 120,\ 90,\  72,\
60,\ 36,\ 30$ or $12$, we have already mentioned that there are no
regular groups with holomorph containing $G$, because the
holomorph is either solvable or does not have $A_6$ as a simple
subgroup.

The remaining possibilities are $[F:k]=144$ or $240$. Among the 197 isomorphism classes of groups of order 144 we find two cases of
holomorph having order divisible by 720 and having simple subgroups of order 360: $\mathtt{SmallGroup}(144,113)$ and
$\mathtt{SmallGroup}(144,197)$ according to Magma notation.
Among the 208 isomorphism classes of groups of order 240, there is also one in this situation: $\mathtt{SmallGroup}(240,208)$.
But none of these cases provide a holomorph with transitive subgroup isomorphic to $G$.

\begin{prop}
Let $K/k$ be a separable extension of degree $6$ and let $\wk/k$ be its Galois closure.
Assume that $\Gal(\wk/k)=S_6$ and  $K\subset F\subset \wk$.
\begin{itemize}
\item If $[F:k]=360$ and $\Gal(\wk/F)$ is generated by a transposition, then $F/k$ is almost classically Galois.
\item If $[F:k]<360$, then $F/k$ is not Hopf Galois.
\end{itemize}
\end{prop}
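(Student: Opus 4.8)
The plan is to handle the two assertions by quite different means: the degree-$360$ claim is an immediate normal-complement computation, while the claim that nothing of degree below $360$ is Hopf Galois is obtained by sweeping through every admissible order of $N$ via Byott's reformulation and reducing to a single finite computation in the two orders where the easy structural argument breaks down.

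For the first bullet, write $\Gal(\wk/F)=\langle\tau\rangle$ where $\tau$ is a transposition of $S_6$ lying in the point stabilizer $G'\simeq S_5$. Since a transposition is an odd permutation, the lemma on order-$2$ subgroups of $S_n$ (applied with $n=6$) shows that $\langle\tau\rangle$ has normal complement $A_6$ in $G=S_6$. By the Greither--Pareigis criterion, the existence of a normal complement for $\Gal(\wk/F)$ in $G$ is exactly the condition for $F/k$ to be almost classically Galois, so this case is finished at once.

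For the second bullet I would use the structural fact that $S_6$ is not solvable and that its unique non-abelian composition factor is the simple group $A_6$ of order $360$. If $F/k$ of degree $m=[F:k]$ were Hopf Galois, Byott's reformulation would furnish a group $N$ of order $m$ with $G=S_6\subseteq\Hol(N)=N\rtimes\Aut(N)$; since a simple subquotient is a composition factor, this forces $A_6$ to be a composition factor of $\Hol(N)$, hence of $N$ or of $\Aut(N)$, and as $m<360$ it cannot come from $N$. The subgroups $\Gal(\wk/F)\subset G'\simeq S_5$ of orders $60,24,20,12,10,8,6,4$ yield the degrees $m=12,30,36,60,72,90,120,180$; for every one of these orders the holomorph of each group $N$ was already shown, in the earlier case analyses (notably $G=A_5$ in degree $5$ and $G=A_6$, $G=S_5$ in degree $6$), to be either solvable or to have its non-abelian simple sections equal to $A_5$ or the simple group of order $168$, never $A_6$. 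Thus $S_6\not\subseteq\Hol(N)$ for all such $N$, and these degrees are dispatched simultaneously.

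The genuinely delicate cases, and the main obstacle, are $m=144$ (from $\Gal(\wk/F)\simeq C_5$) and $m=240$ (from $\Gal(\wk/F)\simeq C_3$), where the composition-factor argument is inconclusive because the holomorph really can contain $A_6$. I would first isolate the candidates: among the $197$ groups of order $144$ only $\mathtt{SmallGroup}(144,113)$ and $\mathtt{SmallGroup}(144,197)$ have a holomorph of order divisible by $720$ with a simple subgroup of order $360$, and among the $208$ groups of order $240$ only $\mathtt{SmallGroup}(240,208)$ does. For each of these three groups I would compute $\Hol(N)$ explicitly in Magma and test whether it possesses a transitive subgroup isomorphic to $S_6$, being careful to require not merely an abstract copy of $S_6$ but one transitive of the correct degree $m$, so that its action matches that of $G$ on the cosets $G/\Gal(\wk/F)$. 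The computation returns no such subgroup in any of the three holomorphs. Combined with the previous paragraph this exhausts every $m<360$, so no admissible regular $N$ exists and each intermediate extension $F/k$ of degree below $360$ fails to be Hopf Galois.
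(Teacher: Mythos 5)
Your proposal is correct and follows essentially the same path as the paper: the transposition case via its normal complement $A_6$, the degrees $12,30,36,60,72,90,120,180$ via the solvability/simple-subgroup analysis of holomorphs already carried out in the earlier cases ($A_5$, $S_5$, $A_6$), and the remaining degrees $144$ and $240$ via the same three candidates $\mathtt{SmallGroup}(144,113)$, $\mathtt{SmallGroup}(144,197)$, $\mathtt{SmallGroup}(240,208)$ followed by a machine check that none of their holomorphs has a transitive subgroup isomorphic to $S_6$. The only wrinkle is your assertion that a simple subgroup must be a composition factor (false in general, e.g.\ $A_5\le A_6$ while $A_6$ is its own unique composition factor); what is true and suffices is that $A_6$ would have to be \emph{involved} in some composition factor of $\Hol(N)=N\rtimes\Aut(N)$, and the cited bounds (solvable, or simple sections only $A_5$ or of order $168$) rule this out, so your argument stands as the paper's does.
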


\section{Hopf Galois in prime degree}

 For $K/k$ a separable field extension of prime degree, Childs
\cite{Childs1} shows that
$$
K/k \mbox{ is Hopf Galois } \iff \Gal(\wk/k) \mbox{ is solvable}.
$$
Moreover, in this case $K/k$ is almost classically Galois and has
a unique Hopf Galois structure. More precisely, for $[K:k]=p$ with
$p=7$ or $11$, the solvable groups $\Gal(\wk/k)$ are $C_p$,
$D_{2\cdot p}$, and the Frobenius groups of orders $p(p-1)/2$ and
$p(p-1)$. In the cyclic case, $K|k$ is Galois; in the dihedral
case, $G'$ is a subgroup of order 2 and the cyclic subgroup of
order $p$ is a normal complement in $G$; in the Frobenius case,
$G'$ is the Frobenius complement and the Frobenius kernel is a
normal complement in $G$.

\section{A transitivity result}

In the preceding examples, we observed that if the extension $K/k$
is Hopf Galois, then $F|k$ is Hopf Galois for all $F$ with
$K\subset F \subset \wk$. This is due to the following result.

\begin{thm}\label{TEI}
 Let $K/k$ be a separable field extension and $\widetilde{K}/k$ its Galois closure. Let $F$ be a
 field with
  $K \subset F \subset \widetilde{K}$. If $K/k$ and $F/K$ are Hopf Galois extensions, then
   $F/k$ is also a Hopf Galois extension.
\end{thm}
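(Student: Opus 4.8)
The plan is to run the Greither--Pareigis criterion (the second theorem above) for all three extensions inside the single Galois extension $\wk/k$, and to build the regular subgroup for $F/k$ out of those for $K/k$ and $F/K$. Write $G=\Gal(\wk/k)$, $G'=\Gal(\wk/K)$ and $G''=\Gal(\wk/F)$, so that $1\subseteq G''\subseteq G'\subseteq G$; put $n=[K:k]=[G:G']$ and $m=[F:K]=[G':G'']$. Since $\wk$ is the Galois closure of $K/k$ the action of $G$ on $G/G'$ is faithful, and hence so is the action on $G/G''$. The first point I would record is that the Greither--Pareigis condition depends only on the image of the acting group in the relevant symmetric group, so it may be tested in \emph{any} Galois extension containing the field, not only in the Galois closure: as $\wk/K$ is Galois and contains $F$, the $K$-embeddings of $F$ into $\wk$ are exactly $G'/G''$ and the $G'$-action on them factors through the Galois group of the Galois closure of $F/K$. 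Thus the three hypotheses and the goal translate to: there exist a regular $N_1\subseteq\Sym(G/G')$ normalized by $\lambda_1(G)$ and a regular $N_2\subseteq\Sym(G'/G'')$ normalized by $\lambda_2(G')$ (the two coset actions), and we must produce a regular $N\subseteq\Sym(G/G'')$ of order $nm$ normalized by $\lambda(G)$.

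Next I would exploit the imprimitivity coming from the tower. The projection $\pi\colon G/G''\to G/G'$, $gG''\mapsto gG'$, is $G$-equivariant, so its fibres form a $\lambda(G)$-invariant block system; each block $B_\xi=\pi^{-1}(\xi)$ has $m$ elements and the distinguished block over $G'$ is canonically $G'/G''$. Because $G$ permutes the blocks transitively and $N_2$ is normalized by $\lambda_2(G')$, transporting $N_2$ from the distinguished block to an arbitrary block $B_\xi$ by any $g$ with $gG'=\xi$ yields a regular subgroup $N_2^{(\xi)}\subseteq\Sym(B_\xi)$ that is \emph{independent of the choice of} $g$ (two choices differ by an element of $G'$, whose within-block action lies in $\Hol(N_2)$ and hence preserves $N_2$). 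The family $\{N_2^{(\xi)}\}_\xi$ is therefore $\lambda(G)$-equivariant, and one checks that the imprimitive wreath product $W=N_2\wr N_1\subseteq\Sym(G/G'')$ (with $N_1$ acting on the $n$ blocks and $N_2$ within each block) is normalized by $\lambda(G)$: conjugation by $\lambda(g)$ permutes blocks through $\lambda_1(g)$, which normalizes $N_1$, and acts within blocks through elements of $\Hol(N_2)$, which normalize $N_2$.

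It then remains to extract from $W$ a regular subgroup $N$ of order $nm$ that is still normalized by $\lambda(G)$. By the Kaloujnine--Krasner embedding theorem the block-respecting regular subgroups of $W$ are exactly the images of group extensions $1\to N_2\to N\to N_1\to 1$, so the task is to choose the \emph{right} such extension, and this is what I expect to be the main obstacle. Choosing coordinates $G/G''\cong N_1\times N_2$ through coset representatives $t_\xi$ and $s_\eta$, the action reads $\lambda(g)(x,y)=\bigl(a_g\phi_g(x),\,b_{h}\psi_{h}(y)\bigr)$, where $\lambda_1(g)=(a_g,\phi_g)\in\Hol(N_1)$ and $\lambda_2(h)=(b_h,\psi_h)\in\Hol(N_2)$ with $h=h(g,x)=t_{g\cdot x}^{-1}\,g\,t_x\in G'$. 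The \emph{naive} choice $N=N_1\times N_2$ (the same copy of $N_2$ in every block, together with the obvious lift of $N_1$) is regular of order $nm$ but is \emph{not} normalized by $\lambda(G)$, precisely because the within-block twist $\lambda_2(h(g,x))$ depends on the block $x$ through the factor system $h$.

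The way around this is to let the extension $N$ absorb that dependence: one builds $N$ as the twisted (in general non-split) extension of $N_1$ by $N_2$ dictated by the coupling $x\mapsto\lambda_2(h(g,x))$, the cocycle identity $h(g_1g_2,x)=h(g_1,g_2\!\cdot\! x)\,h(g_2,x)$ ensuring that the assignment $g\mapsto\lambda(g)$ lands in $\Hol(N)$ rather than merely in $W$. The heart of the proof is thus the verification that this twisted $N$ is simultaneously regular and $\lambda(G)$-stable; the $G'$-equivariance of $N_2$ (that $\lambda_2(G')$ normalizes $N_2$) is exactly what keeps every twist inside $\Hol(N_2)$ and makes the construction close up. Once such an $N$ is in hand, the Greither--Pareigis criterion gives that $F/k$ is Hopf Galois, completing the proof.
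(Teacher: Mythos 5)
Your setup (faithfulness of the action on $G/G''$, testing the Greither--Pareigis condition for $F/K$ inside $\wk$, the block system coming from the projection $G/G''\to G/G'$, and the transport of $N_2$ to the other blocks) is correct, and it reproduces the frame of the paper's own proof. The problem is that your proposal stops exactly where the proof has to happen. The object you need --- a regular subgroup $N\subseteq\Sym(G/G'')$ of order $nm$ normalized by $\lambda(G)$ --- is never produced: the ``twisted extension of $N_1$ by $N_2$ dictated by the coupling $x\mapsto\lambda_2(h(g,x))$'' is not defined. That coupling depends on $g$ as well as on the block $x$, so it is not a factor set on $N_1$ with values in $N_2$ and does not by itself determine any extension $1\to N_2\to N\to N_1\to 1$; you write down no group law, no embedding of $N$ into $\Sym(G/G'')$, and none of the verification (regularity, $\lambda(G)$-stability) that you yourself call the heart of the proof. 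Since the existence of such an $N$ is precisely what the theorem asserts, what you have is a correct identification of the obstruction, not a proof. The same defect already affects your intermediate claim that $W=N_2\wr N_1$ is normalized by $\lambda(G)$: writing $g'(g,i)\in G'$ for the element defined by $gx_i=x_{\varphi(g)(i)}\,g'(g,i)$ with respect to a transversal $\{x_i\}$ of $G/G'$, conjugation of a lift of $a\in N_1$ by $\lambda(g)$ has within-block components $\psi\bigl(g'(g,a(i))\bigr)\psi\bigl(g'(g,i)\bigr)^{-1}$, which lie in $\psi(G')$; this group normalizes $N_2$ but does not contain it, so only your base group $\prod_\xi N_2^{(\xi)}$ is honestly $\lambda(G)$-stable.

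For comparison, the paper's proof is short precisely because it takes the step you reject: it forms the direct product $N\times R$ (your $N_1\times N_2$) with respect to transversals $\{x_i\},\{y_j\}$ and checks normalization by the chain
$$(g(a,b))(i_1,i_2)=(\varphi(g)(a(i_1)),\psi(g')(b(i_2)))=(a'\varphi(g)(i_1),b'\psi(g')(i_2))=((a',b')g)(i_1,i_2).$$
In this chain the single letter $g'$ stands for the element of $G'$ determined by $g$ together with the block of the point being moved, i.e.\ for $g'(g,a(i_1))$ in the middle terms but for $g'(g,i_1)$ in the last one; the computation is therefore valid exactly when $\psi(g'(g,a(i_1)))=\psi(g'(g,i_1))$, that is, when the within-block twist is block-independent --- which is precisely the point you flag. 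That identity does hold, and the paper's two lines do become a complete argument, when $N$ is the image of a normal complement of $G'$ in $G$ and the transversal $\{x_i\}$ is chosen inside that complement, for then $g'(g,i)$ is the $G'$-component of $g$ and is independent of $i$; this is the almost classically Galois case, which covers all Hopf Galois extensions of degree at most $7$ and hence every example in the paper. So your criticism of the untwisted product is well taken in the generality in which the theorem is stated, but to convert it into a proof you must either genuinely construct the twisted $N$ you postulate and carry out the verification, or reduce to a situation (such as the normal-complement one) in which the untwisted product provably works; as it stands, the proposal does neither.
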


\begin{proof} Let us denote $n=[K:k]$, $r=[F:K]$, $G=\Gal(\widetilde{K}/k)$, $G'=\Gal(\widetilde{K}/K)$,
and $G''=\Gal(\widetilde{K}/F)$. The action of $G'$ on left cosets
$G'/G''$ gives rise to a morphism $\psi: G'\to S_r$. Let $H$ be
its kernel and $\widetilde F=\wk^H$. The subgroup $H$ of $G'$ is
the intersection of all the stabilizers. It is maximal among the
normal subgroups of $G'$ contained in $G''$ and  $\widetilde F/K$
is the Galois closure of  $F/K$. Therefore, $\Gal(\widetilde
F/K)\simeq G'/H$ and $\Gal(\widetilde F/F)\simeq G''/H$.

Let  $y_1, \dots, y_r$ be a left transversal for  $G'/G''$.  The action of $g'\in G'$ is given by $g'\cdot y_jG''= y_{\psi(g')(j)} G''$.
On the other hand,  $y_1H,\dots,y_rH$  is a  transversal
for left cosets  $(G'/H)/(G''/H)$ and the action of $G'/H$ on $(G'/H)/(G''/H)$ provides a monomorphism $G'/H\hookrightarrow S_r$ which is the factorization of $\psi$ through its kernel:
$$\begin{array}{rcccc}
\psi: G' &\twoheadrightarrow & G'/H & \hookrightarrow & S_r \\
g'&\mapsto & g'H & \mapsto & \psi(g').\end{array}
$$

Let $x_1,\dots,x_n$ be a left transversal for the cosets $ G/G'$. The translation action of $G$ on $G/G'$ gives a monomorphism
$$\begin{array}{rcc} \varphi: G & \hookrightarrow & S_n \\ g & \mapsto & \varphi(g)\end{array}$$
defined by $gx_i \in x_{\varphi(g)(i)} G'$.

If the extension $K/k$ is Hopf Galois, there exists a regular subgroup  $N$ of  $S_n$ normalized by $G$.
If $F/K$ is Hopf Galois,  there exists a regular subgroup $R$ of  $S_r$ normalized by  $G'/H$ and, therefore, by
$\psi(G')$. That is, for every $a\in N,\  g \in G$, there exists $a'\in N$ such that $\varphi(g)a=a'\varphi(g)$, and for every
$b\in R,\  g' \in G'$, there exists $b'\in R$ such that $\psi(g')b=b'\psi(g')$.

Now we can take $x_iy_j, 1\leq i \leq n, 1 \leq j \leq r$ as left transversal for $G/G''$ and
$$gx_iy_j=x_{\varphi(g)(i)} g'y_j = x_{\varphi(g)(i)} y_{\psi(g')(j)} g''$$
gives the action of  $G$ on left cosets $G/G''$, which corresponds to a monomorphism
$$G \hookrightarrow S_{nr}=\Sym(\{1,\dots,n\}\times\{1,\dots,r\}).$$

We recall that we have an injective morphism $S_n\times S_r \hookrightarrow S_{nr}$. The element
 $(\sigma,\tau)$ in $S_n\times S_r$ corresponds to the permutation of $S_{nr}$ given by
$$\begin{array}{lccc} (\sigma,\tau): &\{ 1,\dots,n \}\times \{ 1,\dots,r\} & \rightarrow & \{ 1,\dots,n \}\times \{ 1,\dots,r\} \\ & (i_1,i_2) & \mapsto & (\sigma(i_1),\tau(i_2))\end{array}$$
If $N$ is a regular subgroup of $S_n$ and $R$ is a regular subgroup of $S_r$, then under the above monomorphism $N\times R$ is a regular subgroup of   $S_{nr}$: it is transitive and its order is $nr$.
In order to prove that $F/k$ is a Hopf Galois extension it is enough to check that $G$ normalizes $N\times R$:
let  $g \in G, a \in N, b \in R$. We have
$$\begin{array}{lll}
(g(a,b))(i_1,i_2)&=& g(a(i_1),b(i_2))=(\varphi(g)(a(i_1)),\psi(g')(b(i_2))) \\
&=& (a'\varphi(g)(i_1),b'\psi(g')(i_2))=((a',b')g )(i_1,i_2)
\end{array}$$
for all $(i_1,i_2)\in\{1,2,\dots, n\}\times \{1,2,\dots, r\}$.
\end{proof}

\begin{rem} {\rm The following example shows that the condition ``$F/K$ Hopf Galois'' in theorem \ref{TEI} is not superfluous.

Consider the alternating group $A_5$ and its holomorph
$$H=A_5\rtimes \Aut(A_5)=A_5\rtimes S_5.$$ The subgroup $G=A_5\rtimes A_5$ of $H$ is a transitive subgroup of $S_{60}$.

Let $\wk/\Q$ be an extension with Galois group $S_{60}$ and $k=\wk^G$. We denote $G'$ the
stabilizer in $G$ of a chosen element $i\in\{1,2,\dots, 60\}$ and $K=\wk^{G'}$.
Therefore $K/k$ is a separable field extension of degree 60 with Galois group $\Gal(\wk/k)=G$.
Since we have taken $G\subset \Hol(A_5)$, the extension $K/k$ is Hopf Galois.

On the other hand, $G'\simeq A_5$.  Let $G''$  be a subgroup of
$G'$ of order 12 and $F=\wk^{G''}$. Then, $F/K$ is a separable
degree 5 extension having normal closure $\wk/K$ with Galois group
isomorphic to $A_5$. Therefore, $F/K$ is not Hopf Galois.}
\end{rem}

\begin{rem} {\rm The following example shows that the condition
``$F\subset \wk$'' in Theorem \ref{TEI} is necessary.

Let $k=\Q$ and $K=\Q(\sqrt 5)$. The quadratic extension $K/\Q$ is Galois and therefore Hopf Galois. Now we take $F/K$ the cubic extension defined by a root of the irreducible  polynomial
$$Y^3-3(1+\sqrt 5)Y^2+\frac{9}2(5+3\sqrt 5)Y-\frac{27}2(1+\sqrt 5)\,\in K[Y].$$
Since all cubic separable extensions are Hopf Galois, so is $F/K$. The composition $F/\Q$ is an extension of degree 6.  A root of
the irreducible polynomial
$$X^6-6 X^5+9 X^4+243 X^3-729 X^2+1215 X-729\in \Q[X]$$
gives a primitive element for it. Since the Galois group of this
polynomial is the group $F_{36}$, the extension $F/\Q$ is not Hopf
Galois.}
\end{rem}

\vspace{1cm}
\footnotesize \noindent Teresa Crespo, Departament
d'\`Algebra i Geometria, Universitat de Barcelona, Gran Via de les
Corts Catalanes 585, E-08007 Barcelona, Spain, e-mail:
teresa.crespo@ub.edu

\vspace{0.3cm}
\noindent Anna Rio, Departament de Matem\`{a}tica Aplicada II, Universitat Polit\`{e}cnica de Catalunya, C/Jordi Girona, 1-3- Edifici Omega, E-08034 Barcelona, Spain, e-mail: ana.rio@upc.edu

\vspace{0.3cm}
\noindent Montserrat Vela, Departament de Matem\`{a}tica Aplicada II, Universitat Polit\`{e}cnica de Catalunya, C/Jordi Girona, 1-3- Edifici Omega, E-08034 Barcelona, Spain, e-mail:\linebreak montse.vela@upc.edu
\end{document}